%
\documentclass[10pt, reqno]{amsart}
\usepackage{amsmath, amsthm, amscd, amsfonts, amssymb, graphicx, color}
\usepackage[bookmarksnumbered, colorlinks, plainpages]{hyperref}
\hypersetup{colorlinks=true,linkcolor=red, anchorcolor=green, citecolor=cyan, urlcolor=red, filecolor=magenta, pdftoolbar=true}
\usepackage[cp1251]{inputenc}
\textheight 22.7truecm \textwidth 15.2truecm
\setlength{\oddsidemargin}{0.35in}\setlength{\evensidemargin}{0.35in}

\setlength{\topmargin}{-.5cm}

\newtheorem{theorem}{Theorem}[section]
\newtheorem{lemma}[theorem]{Lemma}

\newtheorem{corollary}[theorem]{Corollary}
\theoremstyle{definition}
\newtheorem{definition}[theorem]{Definition}

\theoremstyle{remark}
\newtheorem{remark}[theorem]{Remark}
\numberwithin{equation}{section}

\begin{document}

\setcounter{page}{1}

\title[On inhomogeneous exterior Robin problems ...]{On inhomogeneous exterior Robin problems with critical nonlinearities}

\author[M. B. Borikhanov and B. T. Torebek]{Meiirkhan B. Borikhanov and  Berikbol T. Torebek}

\address{\textcolor[rgb]{0.00,0.00,0.84}{Meiirkhan B. Borikhanov \newline Khoja Akhmet Yassawi International Kazakh--Turkish University \newline Sattarkhanov ave., 29, 161200 Turkistan, Kazakhstan  \newline Institute of
Mathematics and Mathematical Modeling\newline 125 Pushkin str., 050010 Almaty, Kazakhstan}}
\email{\textcolor[rgb]{0.00,0.00,0.84}meiirkhan.borikhanov@ayu.edu.kz}

\address{\textcolor[rgb]{0.00,0.00,0.84}{Berikbol T. Torebek \newline Institute of
Mathematics and Mathematical Modeling \newline 125 Pushkin str.,
050010 Almaty, Kazakhstan \newline Department of Mathematics: Analysis, Logic and Discrete Mathematics \newline
Ghent University, Belgium}}
\email{\textcolor[rgb]{0.00,0.00,0.84}{berikbol.torebek@ugent.be}}


\let\thefootnote\relax\footnote{$^{*}$Corresponding author}

\subjclass[2020]{35K70, 35A01, 35B44.}

\keywords{exterior Robin problem, critical exponent, nonexistence, global solution.}

\begin{abstract}
The paper studies the large-time behavior of solutions to the Robin problem for PDEs with critical nonlinearities. For the considered problems, nonexistence results are obtained, which complements the interesting recent results by Ikeda et al. [J. Differential Equations, 269 (2020), no. 1, 563-594], where critical cases were left open. Moreover, our results provide partially answers to some other open questions previously posed by Zhang [Proc. Roy. Soc. Edinburgh Sect. A, 131 (2001), no. 2, 451-475] and Jleli-Samet [Nonlinear Anal., 178 (2019), 348-365].
\end{abstract}
\maketitle
\tableofcontents
\section{Introduction}
Recently, Ikeda et al. in \cite{Ikeda} have studied the exterior inhomogeneous Robin problems for the following semilinear PDEs:

$\bullet$ elliptic equation
\begin{equation}\label{P1}\left\{\begin{array}{l}
-\Delta u={{u}^{p}},\,\,x\in\Omega^c, \\{}\\
\large\displaystyle\frac{\partial u}{\partial \nu}+\alpha u=f(x),\,\,\,x\in \partial\Omega,
\end{array}\right.\end{equation}

$\bullet$ heat equation
\begin{equation}\label{P2}\left\{\begin{array}{l}
u_t-\Delta u={{|u|}^{p}},\,\,(t,x)\in (0,\infty)\times\Omega^c, \\{}\\
\large\displaystyle\frac{\partial u}{\partial \nu}+\alpha u=f(x),\,\,(t,x)\in (0,\infty)\times\partial\Omega,\\{}\\
u\left( 0,x \right)=u_0\left( x \right),\,\,x\in \Omega^c,
\end{array}\right.\end{equation}

$\bullet$ wave equation
\begin{equation}\label{P3}\left\{\begin{array}{l}
u_{tt}-\Delta u={{|u|}^{p}},\,\,(t,x)\in (0,\infty)\times\Omega^c, \\{}\\
\large\displaystyle\frac{\partial u}{\partial \nu}+\alpha u=f(x),\,\,(t,x)\in (0,\infty)\times\partial\Omega,\\{}\\
u\left( 0,x \right)=u_0\left( x \right),\,\,\,
u_t\left( 0,x \right)=u_1\left( x \right),\,\,x\in \Omega^c,
\end{array}\right.\end{equation}

$\bullet$ damped wave equation
\begin{equation}\label{P4}\left\{\begin{array}{l}
u_{tt}-\Delta u+u_t={{|u|}^{p}},\,\,(t,x)\in (0,\infty)\times\Omega^c, \\{}\\
\large\displaystyle\frac{\partial u}{\partial \nu}+\alpha u=f(x),\,\,(t,x)\in (0,\infty)\times\partial\Omega,\\{}\\
u\left( 0,x \right)=u_0\left( x \right),\,\,
u_t\left( 0,x \right)=u_1\left( x \right),\,\,x\in \Omega^c,
\end{array}\right.\end{equation}
where $\Omega=B(0,1)$ is the unit open ball in $\mathbb{R}^N$, $N\geq3, \Omega^c=\mathbb{R}^N\setminus\Omega,\,\alpha>0,\,p\in(1,\infty), f\in L^1(\partial\Omega),\,f\not\equiv0$ and $u_0,u_1\in L^1_{\text{loc}}(\overline{\Omega^c})$. In addition, $\nu$ is the outward unit normal on $\partial\Omega$.

They showed that:

{\it Suppose that $u_i \in L^1_{loc}(\Omega^c), u_i \geq 0, i = 0, 1,$ and $f \in L^1(\partial\Omega).$}
\begin{description}
\item[(a)] If $$1 < p < \frac{N}{N -2}, N\geq 2$$ and $$\int_{\partial\Omega} f (x) d\sigma > 0,$$ then problems \eqref{P2}, \eqref{P3} and \eqref{P4} admit no global weak solutions. In particular, problem \eqref{P1} has no positive solutions.
\item[(b)] If $$p > \frac{N}{N -2},\,N\geq 3,$$ the problems \eqref{P2}, \eqref{P3} and \eqref{P4} has positive global solutions for some $f > 0.$ In particular, problem \eqref{P1} has positive solution for some $f > 0.$
\end{description}

They also studied the dependence of the critical exponent on the initial data and obtained the following results:

{\it Suppose that $u_i \in L^1_{loc}(\Omega^c), u_i \geq 0, i = 0, 1,$ and $f \in L^1(\partial\Omega),\,f\geq 0.$ Let $u_0$ satisfies
\begin{equation}\label{u0}
u_0(x)\geq A|x|^\lambda,\,\,\,\,A>0,\,\,\,\,2-N<\lambda<0.
\end{equation}}
\begin{description}
\item[(a*)] If $$1 < p < 1-\frac{2}{\lambda},$$ then problems \eqref{P2}, \eqref{P3} and \eqref{P4} possess no global weak solutions.
\item[(b*)] If $$p > 1-\frac{2}{\lambda},$$ then problems \eqref{P2}, \eqref{P3} and \eqref{P4} have global positive solutions for some $f > 0.$
\end{description}

It should be noted that in the above results the critical cases $p=\frac{N}{N -2},$ and $p = 1-\frac{2}{\lambda}$ were not investigated and the large-time behavior of the solution in the critical cases is still {\bf open}. The main purpose of our paper is to attempt to answer this question and fill the gap in the results of Ikeda et al. from \cite{Ikeda}.

For the convenience of readers, below we will provide more detailed historical facts about the above problems and about critical exponents.

When $\Omega$ is empty, Fujita (see \cite{Fujita}) showed that the critical exponent of the parabolic problem \eqref{P2} is $1+\frac{2}{N},$ i.e.:
\begin{itemize}
\item[(i)] If $1 <p< 1+\frac{2}{N}$ and $u_0\geq 0,$ then problem \eqref{P2} has no global positive solutions;
\item[(ii)] If $p > 1+\frac{2}{N}$ and $u_0$ is  smaller than a small Gaussian, then \eqref{P2} has global positive solutions.
\end{itemize}
The number $1+\frac{2}{N}$ is called the Fujita critical exponent. When $\Omega$ is bounded non-empty domain, Bandle and Levine \cite{Bandle} showed that the critical exponent of semilinear heat equation with homogeneous exterior Dirichlet conditions is still $1+\frac{2}{N}$. Also, in \cite{Levine, Rault} it was shown that the number $1+\frac{2}{N}$ is still a critical exponent for the semilinear heat equation with homogeneous exterior Neumann and with homogeneous exterior Robin conditions. The number $1+\frac{2}{N}$ is also a critical exponent for the exterior damping wave problems (see \cite{Dabbicco, Fino, Ikeda1, Ikehata, Ogawa}).

In \cite{Bandle2} Bandle et al. studied the parabolic problem \eqref{P2} subject to the inhomogeneous Dirichlet and Neumann boundary conditions. Namely, it was shown that
\begin{itemize}
\item[(i)] If $1<p\leq \frac{N}{N-2},\,N\geq 2$ and $\int_{\partial\Omega} f(x) d\sigma>0,$ then \eqref{P2} has no global solutions for $\alpha=0$ or $\alpha=\infty.$
\item[(ii)] If $p> \frac{N}{N-2},\,N\geq 3,$ then problem \eqref{P2} for $\alpha=0$ or $\alpha=\infty,$ has global solutions for some $f>0$ and $u_0>0.$
\end{itemize}
In \cite{Zhang} Zhang generalized these results considering weighted nonlinearity $|x|^m|u|^p,\,m>-2$ instead of the nonlinear term $|u|^p$ and showed that $\frac{N+m}{N-2}$ is a critical exponent. In addition, he also studied semilinear elliptic equation (with nonlinearity $|x|^m|u|^p,\,m>-2$) with exterior Dirichlet or exterior Neumann non-zero boundary conditions and semilinear wave equations (with nonlinearity $|x|^m|u|^p,\,m>-2$) with exterior Neumann inhomogeneous boundary conditions. It is shown that $\frac{N+m}{N-2}$ is still a critical exponent of these problems. However, critical cases of these problems corresponding to the Neumann boundary conditions were not studied and remained open.

It is worth noting that several extensions of the above results to more general types of parabolic and hyperbolic equations with exterior Dirichlet or Neumann boundary conditions have been studied in a number of papers (for example, see \cite{Jleli2, Jleli0, Jleli1, Laptev1, Pinsky, Sun, Zeng, Zeng1}).

Recently, Jleli and Samet \cite{Jleli} obtained a number of interesting results for the heat and wave (with and without damping) equations with inhomogeneous Dirichlet conditions on exterior domains. That is, they showed that the critical exponent of the studied problems is $\frac{N}{N-2},$ they also proved that in the two-dimensional case, the studied problems do not have global positive solutions for all $p>1$. However, the critical cases $p=\frac{N}{N-2}$ of the studied problems have also not been studied and remain open \cite[Remark 1.7]{Jleli}.

\section{Main results}
In this section, we derive the main results of the present paper.

Before presenting our main results, let us mention in which sense the solutions are considered.

Let $Q:=(0,\infty)\times\Omega^c$ and $\Gamma:=(0,\infty)\times\partial\Omega$, here we have to note that $\Omega^c$ is closed and $\Gamma\subset \Omega^c.$
\begin{definition}[Weak solution]\label{WS1} Let $f\in L^1(\partial\Omega)$. We say that $u\in L^p_\text{loc}(\overline{\Omega^c})$ is a positive weak solution to \eqref{P1}, if
\begin{equation}\label{WP1}\begin{split}
&\int_{\Omega^c}u^{p}\varphi_2  dx +\int_\Gamma f\varphi_2 d\sigma = -\int_{\Omega^c} u\Delta\varphi_2 dx ,
\end{split}\end{equation}
holds for all $\varphi_2\in {C^{2}(\overline{\Omega^c})}, \varphi\geq0$ such that
\begin{description}
\item[(i)] $\varphi(x)\equiv0,\,\,|x|\geq R$ here $R>1$;
\item[(ii)] $\large\displaystyle\frac{\partial }{\partial \nu}\varphi_2(x)+\alpha \varphi_2(x)=0,\,\,x\in \partial\Omega$;
\item[(iii)] $\Delta \varphi_2\in C(\overline{\Omega^c})$
\end{description}
and the notation $d\sigma$ is the surface measure on $\partial\Omega$.
\end{definition}

\begin{definition}[Global weak solution]\label{WS2} Let $u_0\in L^1_{\text{loc}}(\overline{\Omega^c})$ and $f\in L^1(\partial\Omega)$. We say that $u\in L^p_\text{loc}([0,\infty)\times\overline{\Omega^c})$ is a global weak solution to \eqref{P2}, if
\begin{equation}\label{WP2}\begin{split}
&\int_Q|u|^{p}\varphi  dx dt +\int_{\Omega^c} u_0(x)\varphi(0,x) dx+\int_\Gamma f\varphi d\sigma dt
= -\int_Q u\varphi_tdx dt-\int_Q u\Delta\varphi dx dt,
\end{split}\end{equation}
holds for all $\varphi\in C_{t,x}^{1,2}((0,\infty)\times\Omega^c)\cap([0,\infty)\times\overline{\Omega^c}), \varphi\geq0$ such that
\begin{description}
\item[(i)] $\varphi(t,\cdot)\equiv0,\,\,t\geq T$ here $T>0$;
\item[(ii)] $\varphi(\cdot,x)\equiv0,\,\,|x|\geq R$ here $R>1$;
\item[(iii)] $\large\displaystyle\frac{\partial }{\partial \nu}\varphi(x)+\alpha \varphi(x)=0,\,\,x\in \Gamma$;
\item[(iv)] $\varphi_t, \Delta \varphi\in C([0,\infty)\times\overline{\Omega^c})$.
\end{description}
\end{definition}

\begin{definition}[Global weak solution]\label{WS3} Let $u_0, u_1\in L^1_{\text{loc}}(\overline{\Omega^c})$ and $f\in L^1(\partial\Omega)$. We say that $u\in L^p_\text{loc}([0,\infty)\times\overline{\Omega^c})$  is a global weak solution to \eqref{P3}, if
\begin{equation}\label{WP3}\begin{split}
\int_Q|u|^{p}\varphi  dx dt +\int_{\Omega^c} u_1(x)\varphi(0,x) dx&-\int_{\Omega^c} u_0(x)\varphi_t(0,x) dx+\int_\Gamma f\varphi d\sigma dt
\\&= -\int_Q u\varphi_{tt}dx dt-\int_Q u\Delta\varphi dx dt,
\end{split}\end{equation}
holds for all $\varphi\in C_{t,x}^{2,2}((0,\infty)\times\Omega^c)\cap([0,\infty)\times\overline{\Omega^c}), \varphi\geq0$ such that
\begin{description}
\item[(i)] $\varphi(t,\cdot)\equiv0,\,\,t\geq T$ here $T>0$;
\item[(ii)] $\varphi(\cdot,x)\equiv0,\,\,|x|\geq R$ here $R>1$;
\item[(iii)] $\large\displaystyle\frac{\partial }{\partial \nu}\varphi(t,x)+\alpha \varphi(t,x)=0,\,\,(t,x)\in\Gamma$;
\item[(iv)] $\varphi_{tt}, \Delta \varphi\in C([0,\infty)\times\overline{\Omega^c}),\,\varphi_t(0,\cdot)\in (\overline{\Omega^c})$.
\end{description}
\end{definition}

\begin{definition}[Global weak solution]\label{WS4} Let $u_0, u_1\in L^1_{\text{loc}}(\overline{\Omega^c})$ and $f\in L^1(\partial\Omega)$. We say that $u\in L^p_\text{loc}([0,\infty)\times\overline{\Omega^c})$  is a global weak solution to \eqref{P4}, if
\begin{equation}\label{WP4}\begin{split}
\int_Q|u|^{p}\varphi  dx dt+\int_{\Omega^c} (u_0(x)&+u_1(x))\varphi(0,x) dx-\int_{\Omega^c} u_0(x)\varphi_t(0,x) dx+\int_\Gamma f\varphi d\sigma dt
\\&= \int_Q u\varphi_{tt}dx dt-\int_Q u\varphi_{t}dx dt-\int_Q u\Delta\varphi dx dt,
\end{split}\end{equation}
holds for all $\varphi\in C_{t,x}^{2,2}((0,\infty)\times\Omega^c)\cap([0,\infty)\times\overline{\Omega^c}), \varphi\geq0$ such that
\begin{description}
\item[(i)] $\varphi(t,\cdot)\equiv0,\,\,t\geq T$ here $T>0$;
\item[(ii)] $\varphi(\cdot,x)\equiv0,\,\,|x|\geq R$ here $R>1$;
\item[(iii)] $\large\displaystyle\frac{\partial }{\partial \nu}\varphi(t,x)+\alpha \varphi(t,x)=0,\,\,(t,x)\in \Gamma$;
\item[(iv)] $\varphi_{t}, \varphi_{tt}, \Delta \varphi\in C([0,\infty)\times\overline{\Omega^c})$.
\end{description}
\end{definition}

\begin{theorem}\label{TT1} Suppose that $u_0, u_1\in L^1_{\text{loc}}(\overline{\Omega^c})$, $N\geq3$ and $f\in L^1(\partial \Omega)$. If
$$\int_{\partial\Omega}f(x)d\sigma>0\,\,\,\,\,\text{and}\,\,\,\,\,p=\frac{N}{N-2},$$then we have the following properties:
\begin{itemize}
\item[\textbf{(i)}] The problem \eqref{P1} does not admit positive weak solutions.
\item[\textbf{(ii)}] The problem \eqref{P2} possesses no global in time weak solutions.
\item[\textbf{(iii)}] The problem \eqref{P3} does not admit global in time weak solutions.
\item[\textbf{(iv)}] The problem \eqref{P4} does not admit global in time weak solutions.
\end{itemize}
\end{theorem}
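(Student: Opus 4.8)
The plan is to run a nonlinear capacity (test-function) argument in which the cutoff is spread over a logarithmic scale, so that the governing capacity vanishes exactly at the critical exponent. I would treat the stationary problem \eqref{P1} first and then reduce the three evolution problems to it.

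First I would pass to the radial variable and set $t=\log|x|$, looking for a radial test function $\varphi_R(x)=g(\log|x|)\ge0$ with the following features: $g(0)=1$ and $g'(0)$ chosen to meet the Robin condition \textbf{(ii)} of Definition \ref{WS1} on $\partial\Omega$; $g$ equal to a harmonic profile $g(t)=A+Be^{-(N-2)t}$ on $[0,\beta\log R]$ (with $0<\beta<1$), so that $\Delta\varphi_R\equiv0$ there; and $g$ decreasing smoothly to $0$ on $[\beta\log R,\log R]$, so that $\varphi_R$ is supported in $|x|\le R$ while $\Delta\varphi_R$ is supported only in the far annulus $\{R^\beta\le|x|\le R\}$. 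With this normalization the boundary term equals $\int_{\partial\Omega}f\,d\sigma=:c_0>0$. The decisive computation is that, for radial $\varphi$, $\Delta\varphi=|x|^{-2}\big(g''+(N-2)g'\big)$, and since the conjugate exponent is $p'=p/(p-1)=N/2$, the weight in the capacity integral $I_R:=\int_{\Omega^c}\varphi_R^{1-p'}|\Delta\varphi_R|^{p'}\,dx$ reduces to $|x|^{N-2p'}=|x|^{0}$; this exact cancellation of the spatial scaling is the analytic signature of the critical exponent $p=\frac{N}{N-2}$.

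For \eqref{P1} I would insert $\varphi_R$ into \eqref{WP1} and apply H\"older and Young's inequalities to absorb the nonlinear term, obtaining $\tfrac12\int_{\Omega^c}|u|^p\varphi_R+c_0\le C\,I_R$. Because $\Delta\varphi_R\equiv0$ on the harmonic region, $I_R$ reduces to an integral over the far annulus; choosing there a profile decaying like a power of $\log(R/|x|)/\log(R^{1-\beta})$ yields $I_R\le C(\log R)^{1-p'}$, which tends to $0$ because $p'>1$. Hence $c_0\le C\,I_R\to0$, contradicting $c_0>0$, and \textbf{(i)} follows. For \textbf{(ii)}--\textbf{(iv)} I would use $\varphi(t,x)=\eta(t/T)\varphi_R(x)$ with $\eta(0)=1$ supported in $[0,1]$ and run the same H\"older--Young estimate in $(t,x)$: the temporal capacities coming from $\varphi_t$ (heat) and from $\varphi_{tt},\varphi_t$ (wave and damped wave) carry powers $T^{1-p'}$ and $T^{1-2p'}$ and vanish as $T\to\infty$, while the initial-data contributions $\int_{\Omega^c}u_0\varphi_R\,dx$ and $\int_{\Omega^c}u_1\varphi_R\,dx$ are fixed and become negligible after dividing by the time-integrated boundary term, which is of order $c_0T$. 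Sending $T\to\infty$ for fixed $R$ therefore collapses each problem to the stationary inequality $c_0\le C\,I_R$, and then $R\to\infty$ closes the argument as before; this is also why no sign condition on $u_0,u_1$ is required.

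The main obstacle is precisely the criticality: with the usual power cutoff the capacity $I_R$ is scale-invariant and does \emph{not} vanish, so the argument collapses. The two devices that overcome this are (a) spreading the cutoff over the logarithmic annulus $\{R^\beta\le|x|\le R\}$, which is what produces the decaying factor $(\log R)^{1-p'}$, and (b) forcing $\varphi_R$ to be exactly harmonic between $\partial\Omega$ and the cutoff zone, so that the Robin constraint itself contributes nothing to $I_R$. Reconciling these---exhibiting a strictly positive harmonic profile that satisfies the Robin condition for every $\alpha>0$ while confining all of the capacity to the slowly varying far transition---is the delicate step; for the evolution equations the extra care lies in decoupling the time scale by letting $T\to\infty$ before $R\to\infty$.
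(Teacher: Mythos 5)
Your proposal is correct and follows essentially the same route as the paper: a nonlinear capacity argument with a test function built from the Robin-harmonic profile $H(x)=\frac{N-2+\alpha}{\alpha}-|x|^{2-N}$ (your $A+Be^{-(N-2)\log|x|}$ is the same function up to normalization) times a cutoff spread over a logarithmic annulus, yielding the capacity decay $(\ln R)^{1-\frac{N}{2}}\to 0$ at $p'=\frac{N}{2}$, exactly as in Lemma \ref{IE1} and \eqref{Q14}. The only deviation is bookkeeping: you send $T\to\infty$ at fixed $R$ and then $R\to\infty$, whereas the paper couples the scales via $T=R^{j}$ with $j$ large before letting $R\to\infty$; both orderings make the temporal capacities and the initial-data terms vanish relative to the boundary term of size $c_0 T$, so the arguments are equivalent.
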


\begin{remark} The following conclusions can be drawn from the above results:
\begin{itemize}
\item[(a)] Theorem \ref{TT1} completes the Theorems 1.2-1.4 from \cite{Ikeda}, where the critical cases $p=\frac{N}{N-2}$ of problems \eqref{P1}-\eqref{P4} has not been studied. Also, in case $\alpha\rightarrow\infty,$ the results of Theorem \ref{TT1} provide answers to open questions posed in \cite[Remark 1.7]{Jleli} and in \cite[Remark 1.6]{Zhang}.
\item[(b)] It is worth noting that in Theorem \ref{TT1} we did not assume that the initial data were positive. Therefore, when $\alpha\to 0,$ part (ii) of Theorem \ref{TT1} partially improves results of Zhang from \cite[part (a) of Theorem 1.3]{Zhang}, where similar results were obtained for positive initial data.
\end{itemize}
\end{remark}

\begin{theorem}\label{TT2} Suppose that $u_0, u_1\in L^1_{\text{loc}}(\overline{\Omega^c}), u_0\geq0$, $N\geq3,$ and $f\in L^1(\partial \Omega), f\geq 0$. We also assume that $u_0$ satisfies \eqref{u0}
and let $\lambda$ satisfies
$$\left\{\begin{array}{l}
2-N<\lambda<0,\,\,\,\,\,\,\,\,\,\,\,\,\,\,\,\,\,\,\text{if}\,\,\,\,\,N=3,4,\\{}\\
2-N<\lambda<4-N,\,\,\,\,\,\,\text{if}\,\,\,\,N\geq 5.
\end{array}\right.$$ If $$1<p=1-\frac{2}{\lambda},$$
then:
\begin{itemize}
\item[\textbf{(i)}] The problem \eqref{P2} does not admit global in time weak solutions.
\item[\textbf{(ii)}] The problem \eqref{P3} possesses no global in time weak solutions.
\item[\textbf{(iii)}] The problem \eqref{P4} does not admit global in time weak solutions.
\end{itemize}
\end{theorem}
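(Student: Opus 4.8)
The plan is to argue by contradiction using the nonlinear capacity (test function) method, treating the heat problem \eqref{P2} in detail and then reducing \eqref{P3} and \eqref{P4} to it. Suppose a global weak solution $u$ exists. For a large parameter $R>1$ I would insert into the weak identity \eqref{WP2} a test function of separated form
\[
\varphi_R(t,x)=\Phi\!\left(\frac{t}{R^{2}}\right)\Psi_R(x),
\]
where $\Phi\in C^{\infty}([0,\infty))$ is a standard cutoff with $\Phi\equiv1$ on $[0,\tfrac12]$, $\Phi\equiv0$ on $[1,\infty)$ and $\Phi(0)=1$; the parabolic time scale $T\sim R^{2}$ is dictated by the diffusion. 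The spatial factor $\Psi_R$ must satisfy the Robin condition (iii) of Definition \ref{WS2}, so I would build it from the radial harmonic profile $h(x)=a+b|x|^{2-N}$ with $a,b$ chosen so that $\partial_{\nu}h+\alpha h=0$ on $\partial\Omega$ and $h>0$ on $\overline{\Omega^{c}}$, multiplied by a cutoff that is $\equiv 1$ near $\partial\Omega$ and supported in $\{|x|\le R\}$. The decisive structural choice --- and the point at which the argument departs from \cite{Ikeda,Jleli,Zhang} --- is to take this cutoff as a function of $\log|x|/\log R$ rather than of $|x|/R$, so that $\nabla\Psi_R$ and $\Delta\Psi_R$ acquire an extra factor $(\log R)^{-1}$ and are spread over the entire annulus $\{1\ll|x|\le R\}$ rather than a thin dyadic shell.

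Next I would estimate the right-hand side of \eqref{WP2}. Writing the two terms $\int_Q u\,\partial_t\varphi_R$ and $\int_Q u\,\Delta\varphi_R$, factoring out $|u|\varphi_R^{1/p}$ and applying Young's inequality with exponents $p,p'$ (where $p'=\tfrac{2-\lambda}{2}$ precisely at the critical value $p=1-2/\lambda$), the contribution $\tfrac12\int_Q|u|^{p}\varphi_R$ is absorbed into the left-hand side. Since $u_0\ge0$, $u_0\ge A|x|^{\lambda}$ by \eqref{u0}, and $f\ge0$, one may discard the nonnegative term $\tfrac12\int_Q|u|^p\varphi_R$ and the boundary integral, arriving at
\[
A\int_{\Omega^{c}}|x|^{\lambda}\Psi_R\,dx\le\int_{\Omega^{c}}u_0\,\Psi_R\,dx\le C\,\mathcal{C}(R),\qquad \mathcal{C}(R):=\int_Q\frac{|\partial_t\varphi_R|^{p'}}{\varphi_R^{\,p'-1}}\,dx\,dt+\int_Q\frac{|\Delta\varphi_R|^{p'}}{\varphi_R^{\,p'-1}}\,dx\,dt.
\]
A scaling count shows that $p=1-2/\lambda$ is exactly the exponent at which the datum integral on the left and both capacity integrals on the right carry the same power $R^{N+\lambda}$ under the parabolic scaling $T\sim R^{2}$; this exact three-way balance is precisely why the critical case is inaccessible to the power-type cutoffs used previously, for which the inequality becomes a scale-invariant tautology.

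The heart of the proof is therefore to exploit the logarithmic cutoff to break this borderline balance. I would compute $\mathcal{C}(R)$ carefully and aim to show that, because the derivatives of a logarithmic cutoff gain the factor $(\log R)^{-1}$ distributed over the full logarithmic range of scales, the capacity is controlled by $R^{N+\lambda}$ times a strictly negative power of $\log R$ relative to the datum integral; letting $R\to\infty$ then makes the displayed inequality fail, which is the contradiction. I expect this to be the main obstacle: one must simultaneously track the competing logarithmic factors produced by the datum integral, the spatial Laplacian capacity, and the temporal capacity, and verify that only the logarithmic argument tips the net balance --- a gain that is absent for linear or pure-power profiles, where all three quantities share the same $\log$-power. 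Should the one-step gain prove too tight, the fallback is a two-step refinement: first deduce finiteness/boundedness of the critical capacity contributions, then rerun the estimate by Hölder (rather than Young) on the annular support of $\nabla\varphi_R,\Delta\varphi_R$ and let that region escape to infinity. This is also where the dimensional hypothesis enters, since $2-N<\lambda<4-N$ for $N\ge5$ is what guarantees that the Laplacian term governs $\mathcal{C}(R)$ and that the attendant integrals converge.

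Finally, for \eqref{P3} and \eqref{P4} I would use the analogous $\varphi_R$ in \eqref{WP3} and \eqref{WP4}. For the wave equation the natural choice is the hyperbolic scaling $T\sim R$, under which the term $\int_Q u\,\partial_{tt}\varphi_R$ generates a capacity of the same order as the spatial one, reproducing the identical critical balance $R^{N-1+\lambda}$ at $p=1-2/\lambda$; for the damped wave equation the damping $\int_Q u\,\partial_t\varphi_R$ restores the parabolic scaling $T\sim R^{2}$, so \eqref{P4} parallels \eqref{P2} with $\partial_{tt}\varphi_R$ contributing a lower-order term. The only genuine bookkeeping difference is the initial-data row: in \eqref{WP3} the datum enters through $-\int_{\Omega^{c}}u_0\,\partial_t\varphi_R(0,\cdot)$, so the temporal profile must be tuned (its value and slope at $t=0$ prescribed) to extract the sign-definite, $u_0$-driven lower bound while keeping the $u_1$ contribution under control; in \eqref{WP4} the datum enters through $\int_{\Omega^{c}}(u_0+u_1)\varphi_R(0,\cdot)$, where the nonnegative $u_0$ again supplies the lower bound $\ge A\int|x|^{\lambda}\Psi_R$. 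With these adjustments the same logarithmic tie-breaking yields the contradiction in all three cases.
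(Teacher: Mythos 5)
There is a genuine gap, and your route is in fact not the paper's route. Your central mechanism --- a logarithmic spatial cutoff at the natural scales $T\sim R^2$ (heat) and $T\sim R$ (wave), with the $(\log R)^{-1}$ factor breaking the three-way critical balance --- fails at the temporal term, and no power of $\log R$ can repair it. The logarithm enters only through spatial derivatives, so the temporal capacity factorizes exactly as $c_l\,T^{\lambda/2}\int_{\Omega^c}\varphi_2\,dx$ with \emph{no} logarithmic gain, while the strongest datum lower bound the hypothesis \eqref{u0} can ever produce is $A\int_{\Omega^c}|x|^{\lambda}\varphi_2\,dx$. Setting $V=\int_{\Omega^c}\varphi_2\,dx$ and splitting at $|x|=\rho$ (using $0\le\varphi_2\le C$ and $|x|^\lambda\le\rho^\lambda$ outside), one gets $\int|x|^\lambda\varphi_2\,dx\le C\rho^{N+\lambda}+\rho^{\lambda}V\le CV^{1+\lambda/N}$ after optimizing $\rho\sim V^{1/N}$; hence the ratio of the temporal capacity to the datum term is at least $c\,T^{\lambda/2}V^{-\lambda/N}\gtrsim c\,(T/R^2)^{\lambda/2}$, which is bounded below by a positive constant at $T\sim R^2$ \emph{for every admissible spatial profile}, logarithmic or not (the log structure appears identically in both $V$ and the datum integral and cancels in the ratio). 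The same computation with $T^{\lambda-1}$ against the $T^{-1}$-weighted datum blocks the wave case at $T\sim R$ through $\mathcal{J}_2$. Your fallback does not rescue this either: the two-step device of \cite{Torebek,Vetro} rests on first extracting a uniform bound on $\int_Q|u|^p\varphi$ because there the capacity is $O(1)$, whereas here the capacity grows like a positive power of $R$, so no uniform $L^p$ bound is available to feed the H\"older-on-the-annulus refinement.

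The paper's proof is structurally different and explains the features your plan cannot place. It does not use the logarithmic cutoff for this theorem at all (that device is reserved for Theorem \ref{TT1}); it takes the ordinary cutoff $\Psi(|x|/R)$ of \eqref{QWE1}--\eqref{TTQ2} with the time factor $(1-t/T)^l$, and --- crucially --- an \emph{anisotropic} scaling $T=R^j$ in which $j>0$ is a free parameter, played against the datum bound $\int_{\Omega^c}u_0\psi(0,\cdot)\,dx\ge\frac{A}{\alpha}R^{\frac{\lambda+N}{2}}$ obtained in \eqref{QT2} by integrating \eqref{u0} over the annulus $\sqrt{R}<|x|<R$, with the capacity estimates of Corollary \ref{C1}. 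In that scheme the hypothesis $\lambda<4-N$ for $N\ge5$ enters precisely to make the exponent condition $\frac{2j+\lambda+N-4}{2}<0$ satisfiable by some $j>0$, i.e.\ to keep the window for $j$ nonempty --- not, as you conjecture, to ensure convergence of the Laplacian capacity or its dominance in $\mathcal{C}(R)$; under your rigid choices $T\sim R^2$, $T\sim R$ this hypothesis has no role to play, which is a telltale sign the scaling is not the one the statement is tuned to. Two further bookkeeping divergences: for \eqref{P3} the paper's time profile gives $\psi_t(0,\cdot)=-\frac{l}{T}\psi_2$, so $-\int_{\Omega^c}u_0\psi_t(0,\cdot)\,dx$ \emph{is} the nonnegative $u_0$-term (cf.\ \eqref{u01}), whereas your $\Phi$, flat at $t=0$, annihilates it and leaves the wave problem with only the sign-indefinite $u_1$ contribution unless the slope is retuned --- a step you flag but which is the entire content of that case; and since $u_1$ may change sign, the term $\int_{\Omega^c}u_1\psi_2\,dx$ must be carried explicitly with a decaying power of $R$ in front, as the paper does, rather than being kept under control implicitly.
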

\begin{remark} We present below some arguments related to the result of Theorem \ref{TT2}.
\begin{itemize}
\item[{(a)}]
Results of Theorem \ref{TT2} completes Theorem 1.6 from \cite{Ikeda}, where the critical case $p=1-\frac{2}{\lambda}$ was left open. In addition, we do not assume the positiveness of the initial function $u_1$, it can be a sign-changing function.
\item[(b)] It is easy to see that the function $f(x)$ does not affect the result of Theorem \ref{TT2}. Therefore, $1-\frac{2}{\lambda},\,\,\,\,2-N<\lambda<0,$ remains a critical exponent of the above problems with homogeneous Robin conditions.
\item[(c)] It is obvious that $$\frac{N}{N-2}<1-\frac{2}{\lambda},$$ for $2-N<\lambda<0.$  This means that if $u_0$ satisfies \eqref{u0}, then the critical exponent will be expanded, hence the set of solutions for which the studied problems are globally unsolvable also will expand.
\item[(d)] In Theorem \ref{TT2}, we have imposed the restriction from above on $\lambda$ for $N\geq 5$. The method that we use for the proof is not able to remove this restriction. Therefore, we do not know what will happen if $$1<p=1-\frac{2}{\lambda},\,\,4-N\leq\lambda<0,\,N\geq 5.$$ This question remains open.
\end{itemize}
\end{remark}
The proof of main results is based on methods of nonlinear capacity estimates specifically adapted to the nature of the exterior ball. Furthermore, the difference in our approach lies in the fact that we are considering a class of test functions with logarithmic arguments. This approach was previously successfully applied to the study of critical cases of the semilinear wave equation on the noncompact complete Riemannian manifold in \cite{Vetro} and the semilinear pseudo-parabolic equations on $\mathbb{R}^N$ in \cite{Torebek}.

\section{Test functions and useful estimates} This section will cover some properties of test functions. Additionally, we will establish some useful estimates related to the test functions.

We consider the function in the following form
\begin{equation}\label{TF0}
\varphi(t,x)=\varphi_1(t)\varphi_2(x),\,\,\,(t,x)\in Q,\end{equation} for sufficiently large $T,R$
\begin{equation}\label{TF1}
 \varphi_1(t)=\left(1-\frac{t}{T}\right)^l,\,\,\,t>0,\,\, l>\frac{2p}{p-1}
\end{equation}
and
\begin{equation}\label{TF2}
\varphi_2(x)=H(x)\xi(x)=H(x)\Psi^k\left(\frac{\ln\left(\frac{|x|}{\sqrt{R}}\right)}{\ln\left(\sqrt{R}\right)}\right),\,\,\,\,x\in\Omega^c,\,\,k>\frac{2p}{p-1},
\end{equation}
where $\Psi:\mathbb{R}\to [0,1]$ is a smooth function
\begin{equation}\label{TF4}
\Psi(s)=\left\{\begin{array}{l}
1,\,\,\,\,\,\,\text{if}\,\,\,\,-1\leq s\leq 0,\\
\searrow,\,\,\,\text{if}\,\,\,\,0< s<1,\\
0,\,\,\,\,\,\,\text{if}\,\,\,\,s\geq1.\end{array}\right.
\end{equation}
Furthermore, the harmonic function $H(x)$ defined in $\Omega^c$ by
\begin{equation}\label{TF5}
H(x)=\frac{N-2+\alpha}{\alpha}-|x|^{2-N},\,\,\,\text{if}\,\,\,\,N\geq3,
\end{equation}
which solves the exterior problem;
\begin{equation}\label{TF6}
\left\{\begin{array}{l}
-\Delta H=0\,\,\,\text{in}\,\,\,\,\Omega^c,\\{}\\
\large\displaystyle\frac{\partial}{\partial\nu}H+\alpha H=0\,\,\,\,\,\,\text{on}\,\,\,\,\partial\Omega.\end{array}\right.
\end{equation}

\begin{lemma}\label{LL1} For sufficiently large $T$ and $R$, the function $\varphi(t,x)$ is defined by \eqref{TF0} satisfies the following properties
\begin{description}
\item[(i)] $\varphi(t,\cdot)\equiv0,\,\,t\geq T$ here $T>0$;
\item[(ii)] $\varphi(\cdot,x)\equiv0,\,\,|x|\geq R$ here $R>1$;
\item[(iii)] $\large\displaystyle\frac{\partial }{\partial \nu}\varphi(t,x)+\alpha \varphi(t,x)=0,\,\,(t,x)\in \Gamma$, $\alpha>0$.
\end{description}
\end{lemma}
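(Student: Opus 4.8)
The plan is to verify the three stated properties of $\varphi(t,x)=\varphi_1(t)\varphi_2(x)$ directly from the explicit definitions in \eqref{TF0}--\eqref{TF6}, treating each property in turn. Property $(i)$ is immediate: from \eqref{TF1} we have $\varphi_1(t)=\left(1-\frac{t}{T}\right)^l$, which is defined so that $\varphi_1(T)=0$; since $\varphi_1$ is to be read as extended by zero for $t\geq T$ (and $l>\frac{2p}{p-1}>0$ guarantees this extension is continuous), the product $\varphi=\varphi_1\varphi_2$ vanishes for all $t\geq T$. I would simply record this.

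For property $(ii)$, I would examine the spatial factor $\varphi_2(x)=H(x)\Psi^k\!\left(\frac{\ln(|x|/\sqrt{R})}{\ln(\sqrt{R})}\right)$. The key observation is to track the argument $s=\frac{\ln(|x|/\sqrt{R})}{\ln(\sqrt{R})}$ as $|x|$ grows. When $|x|=R$ we compute $s=\frac{\ln(R/\sqrt R)}{\ln\sqrt R}=\frac{\ln\sqrt R}{\ln\sqrt R}=1$, and for $|x|\geq R$ we get $s\geq 1$. By the definition \eqref{TF4} of $\Psi$, we have $\Psi(s)=0$ for $s\geq 1$, hence $\Psi^k=0$ and therefore $\varphi_2(x)=0$ whenever $|x|\geq R$. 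Multiplying by $\varphi_1(t)$ gives $\varphi(\cdot,x)\equiv 0$ for $|x|\geq R$, as claimed. The only mild care needed here is confirming the arithmetic of the logarithmic argument at the cutoff radius.

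Property $(iii)$ is the substantive one and where I would concentrate the argument. On $\Gamma=(0,\infty)\times\partial\Omega$ we have $|x|=1$, so the cutoff argument becomes $s=\frac{\ln(1/\sqrt R)}{\ln\sqrt R}=\frac{-\ln\sqrt R}{\ln\sqrt R}=-1$, which lies in the region where $\Psi\equiv 1$ by \eqref{TF4}. Crucially, I expect that on a neighborhood of $\partial\Omega$ the function $\Psi^k(\cdot)$ is identically $1$ (since $s=-1$ sits in the interior of the flat region $-1\le s\le 0$), so that $\xi(x)\equiv 1$ near $|x|=1$, forcing both $\xi=1$ and $\frac{\partial}{\partial\nu}\xi=0$ there. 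Consequently $\varphi_2=H\xi$ reduces to $H$ near the boundary, and $\frac{\partial}{\partial\nu}\varphi_2+\alpha\varphi_2=\frac{\partial}{\partial\nu}H+\alpha H$, which vanishes on $\partial\Omega$ precisely by the construction \eqref{TF6} of $H$. Factoring out the strictly positive $\varphi_1(t)$ (for $0<t<T$) then yields $\frac{\partial}{\partial\nu}\varphi+\alpha\varphi=\varphi_1\bigl(\frac{\partial}{\partial\nu}\varphi_2+\alpha\varphi_2\bigr)=0$ on $\Gamma$.

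The main obstacle, and the point that must be handled carefully rather than waved through, is the verification that $\xi$ has vanishing normal derivative on $\partial\Omega$. The clean way is the flatness argument just described: because the cutoff argument equals $-1$ at $|x|=1$ and $\Psi$ is constant on $[-1,0]$, one needs $R$ large enough that the full boundary sphere maps into the flat region, so that all radial derivatives of $\xi$ vanish identically near $|x|=1$; then the boundary condition is inherited entirely from $H$ via \eqref{TF6}. I would state this as the reason the hypothesis ``for sufficiently large $T$ and $R$'' appears, and I would make explicit that the Robin condition for $\varphi$ collapses to the Robin condition for $H$, which holds by design. This reduction is what makes the logarithmic test function compatible with the Robin boundary data.
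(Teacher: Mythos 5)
Your proof is correct and takes essentially the same route as the paper's: both reduce property $(iii)$ to the observation that the logarithmic argument equals $-1$ at $|x|=1$, so $\xi\equiv 1$ and $\nabla\xi=0$ on a neighborhood of $\partial\Omega$ inside $\Omega^c$, whence $\frac{\partial}{\partial\nu}\varphi_2+\alpha\varphi_2=\frac{\partial}{\partial\nu}H+\alpha H=0$; the paper merely computes $\frac{\partial}{\partial\nu}H=2-N$ explicitly where you invoke the construction \eqref{TF6}, and it asserts without arithmetic the cutoff checks you spell out for $(i)$--$(ii)$. One cosmetic slip: $s=-1$ is an endpoint, not an interior point, of the flat region $[-1,0]$, but since $x\in\Omega^c$ forces $s\geq -1$ the annulus $1\leq|x|\leq\sqrt{R}$ still maps into it and your argument stands (indeed for every $R>1$, so the largeness of $R$ is not actually needed here).
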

\begin{proof}[Proof of Lemma \ref{LL1}.]  $(i)-(ii)$ follows directly from the properties of $\varphi_1(t)$ and $\varphi_2(x)$, which given by \eqref{TF1}-\eqref{TF4}, respectively.

$(iii)$ In view of \eqref{TF0}, it implies that
\begin{equation*}\label{RC1}
\varphi_1(t)\biggl(\frac{\partial}{\partial \nu}\varphi_2(x)+\alpha\varphi_2(x)\biggr)=0.
\end{equation*}
Therefore, for $1<|x|<1+\varepsilon$, here $\varepsilon$ is sufficiently small, we obtain
\begin{equation*}\label{RC@}\begin{split}
 \nabla\varphi_2(x)&=\nabla\left(H(x)\xi(x)\right)
\\&=H(x)\nabla\xi(x)+\xi(x)\nabla H(x)
\\&=\nabla H(x). \end{split}\end{equation*}
Then, for sufficiently large $R$ by \eqref{TF2} for all $x\in\partial \Omega$, we get
\begin{equation*}\label{FL3}
\frac{\partial}{\partial\nu}\varphi_2(x)=\frac{\partial}{\partial\nu}H(x)=2-N.\end{equation*}
It is easy to verify by direct calculation that
$$\frac{\partial}{\partial \nu}\varphi_2(x)=-\alpha\varphi_2(x),$$
which completes the proof.
\end{proof}
\begin{lemma}\label{X0}Let $N\geq3$ and $f\in L^1(\partial\Omega)$, then
\begin{equation*}\begin{split}
\int_\Gamma f\varphi d\sigma dt =C_{N,\alpha,l} T\int_{\partial\Omega} f \xi(x)d\sigma,\end{split}\end{equation*}
holds true, where
\begin{equation*}\begin{split}
C_{N,\alpha,l}=\frac{N-2}{\alpha(l+1)}.\end{split}\end{equation*}\end{lemma}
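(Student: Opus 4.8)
The plan is to exploit the product structure $\varphi(t,x)=\varphi_1(t)\varphi_2(x)$ from \eqref{TF0}, which decouples the integral over $\Gamma=(0,\infty)\times\partial\Omega$ into a one-dimensional time integral times a surface integral over $\partial\Omega$. Since $f\in L^1(\partial\Omega)$ and $\varphi\ge0$ is bounded with support compact in $t$, Tonelli's theorem licenses the factorization
$$\int_\Gamma f\varphi\,d\sigma\,dt=\left(\int_0^\infty \varphi_1(t)\,dt\right)\left(\int_{\partial\Omega} f(x)\varphi_2(x)\,d\sigma\right),$$
and the whole lemma then reduces to evaluating these two factors separately.

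First I would compute the time factor. Because $\varphi_1(t)=(1-t/T)^l$ vanishes for $t\ge T$ by \eqref{TF1}, the substitution $s=1-t/T$ yields
$$\int_0^\infty \varphi_1(t)\,dt=\int_0^T\left(1-\frac{t}{T}\right)^l dt=T\int_0^1 s^l\,ds=\frac{T}{l+1}.$$
Next I would evaluate $\varphi_2$ on the boundary. Since $\partial\Omega$ is the unit sphere we have $|x|=1$ there, so the explicit formula \eqref{TF5} gives $H(x)=\frac{N-2+\alpha}{\alpha}-1=\frac{N-2}{\alpha}$ on $\partial\Omega$. Hence $\varphi_2(x)=H(x)\xi(x)=\frac{N-2}{\alpha}\,\xi(x)$ for $x\in\partial\Omega$, and the surface factor becomes $\frac{N-2}{\alpha}\int_{\partial\Omega} f\xi\,d\sigma$. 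Multiplying the two factors gives
$$\int_\Gamma f\varphi\,d\sigma\,dt=\frac{T}{l+1}\cdot\frac{N-2}{\alpha}\int_{\partial\Omega} f\xi\,d\sigma=\frac{N-2}{\alpha(l+1)}\,T\int_{\partial\Omega} f\xi\,d\sigma,$$
which is precisely the asserted identity with $C_{N,\alpha,l}=\frac{N-2}{\alpha(l+1)}$.

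Honestly there is no substantive obstacle: the argument is an elementary separation of variables followed by two short computations. The only points that deserve a moment's care are the exact boundary value of $H$, which relies on the constant in \eqref{TF5} being tuned so that $H$ realizes the Robin condition \eqref{TF6}, and the observation that the factor $\xi$ may be retained as written — indeed on $\partial\Omega$ the argument of $\Psi$ equals $\ln(R^{-1/2})/\ln(R^{1/2})=-1$, so $\xi=\Psi^k(-1)=1$ there by \eqref{TF4}, meaning one could equally replace $\int_{\partial\Omega} f\xi\,d\sigma$ by $\int_{\partial\Omega} f\,d\sigma$ without changing the statement.
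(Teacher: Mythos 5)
Your proposal is correct and follows essentially the same route as the paper's proof: factor $\varphi=\varphi_1(t)\varphi_2(x)$ to split the integral over $\Gamma$, evaluate $H\equiv\frac{N-2}{\alpha}$ on $\partial\Omega$ via \eqref{TF5}, and compute $\int_0^T(1-t/T)^l\,dt=\frac{T}{l+1}$. Your closing observation that $\xi\equiv\Psi^k(-1)=1$ on $\partial\Omega$ (so the factor $\xi$ could be dropped) is a correct extra, consistent with how the paper later uses $\lim_{R\to\infty}H(x)\xi(x)=\frac{N-2}{\alpha}$ in the proof of Theorem \ref{TT1}.
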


\begin{proof}[Proof of Lemma \ref{X0}.] Using properties of the test functions \eqref{TF0}-\eqref{TF5}, we get
\begin{equation*}\label{TF10}\begin{split}
\int_\Gamma f\varphi d\sigma dt =\int_\Gamma f\varphi_1(t)\varphi_2(x) d\sigma dt&=\left(\int_0^T\left(1-\frac{t}{T}\right)^ldt\right)\left(\int_{\partial \Omega}fH(x)\xi(x)d\sigma\right)
\\&=\left(\int_0^T\left(1-\frac{t}{T}\right)^ldt\right)\left(\int_{\partial \Omega}\left(\frac{N-2+\alpha}{\alpha}-1\right)f\xi(x)d\sigma\right)
\\&=\left(\frac{N-2}{\alpha(l+1)}\right)T\left(\int_{\partial \Omega}f\xi(x)d\sigma\right),\end{split}\end{equation*}
which gives the desired result.\end{proof}

\begin{lemma}\label{IE1} Let $N\geq3$ and $p=\frac{N}{N-2}$. Then for sufficiently large $T, R$ we have
\begin{equation}\label{IEF1}\begin{split}
&\mathcal{I}_1=\int_Q \varphi^{-\frac{1}{p-1}}|\varphi_t|^\frac{p}{p-1} dxdt \leq CT^{1-\frac{N}{2}}R^N,
\end{split}\end{equation}
\begin{equation}\label{QQ1}\begin{split}
&\mathcal{I}_2=\int_Q\varphi^{-\frac{1}{p-1}}|\varphi_{tt}|^\frac{p}{p-1}dx dt\leq CT^{1-N}R^N
\end{split}\end{equation}
and
\begin{equation}\label{IEF2}\begin{split}
\mathcal{I}_3&=\int_Q\varphi^{-\frac{1}{p-1}}|\Delta\varphi|^\frac{p}{p-1}dx dt
\leq CT\left((\ln R)^{1-N}+(\ln R)^{1-\frac{N}{2}}\right),
\end{split}\end{equation}
where $C$ are positive constants independent of $T$.
\end{lemma}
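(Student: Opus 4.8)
The plan is to exploit the product structure $\varphi(t,x)=\varphi_1(t)\varphi_2(x)$, which factors each of the three integrals into a time integral times a space integral. First I record the exponent identities forced by the critical value $p=\tfrac{N}{N-2}$: one has $p-1=\tfrac{2}{N-2}$, hence $\tfrac{1}{p-1}=\tfrac{N-2}{2}$ and $\tfrac{p}{p-1}=\tfrac N2$. These turn the abstract powers into concrete ones and explain the shape of the right-hand sides.

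For $\mathcal I_1$ I would write $\varphi_t=\varphi_1'\varphi_2$ and observe that $\varphi^{-\frac1{p-1}}|\varphi_t|^{\frac p{p-1}}=\big(\varphi_1^{-\frac1{p-1}}|\varphi_1'|^{\frac p{p-1}}\big)\varphi_2$, since the two powers of $\varphi_2$ combine to $\varphi_2^{\frac p{p-1}-\frac1{p-1}}=\varphi_2$. The space factor is then $\int_{\Omega^c}\varphi_2\,dx$; as $0\le\varphi_2\le\frac{N-2+\alpha}{\alpha}$ is bounded and supported in $\{|x|\le R\}$, this is $\le CR^N$. The time factor reduces, after $s=1-t/T$, to $\big(\tfrac lT\big)^{p/(p-1)}\int_0^T(1-t/T)^{\,l-p/(p-1)}\,dt=C\,T^{1-p/(p-1)}=C\,T^{1-N/2}$, where $l>\tfrac{2p}{p-1}$ guarantees the remaining exponent exceeds $-1$ so the integral converges. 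This gives \eqref{IEF1}. Estimate \eqref{QQ1} is identical with $\varphi_{tt}=\varphi_1''\varphi_2$: the time integrand now carries $(1-t/T)^{\,l-2p/(p-1)}$, producing $T^{1-2p/(p-1)}=T^{1-N}$, while the space factor is again $\le CR^N$.

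The real work is $\mathcal I_3$. Here $\Delta\varphi=\varphi_1\,\Delta\varphi_2$, and since $\varphi^{-\frac1{p-1}}|\Delta\varphi|^{\frac p{p-1}}=\varphi_1\,\varphi_2^{-\frac1{p-1}}|\Delta\varphi_2|^{\frac p{p-1}}$, the time factor is just $\int_0^T\varphi_1\,dt=\tfrac T{l+1}=CT$. Everything hinges on the space integral $\int_{\Omega^c}\varphi_2^{-\frac1{p-1}}|\Delta\varphi_2|^{\frac p{p-1}}\,dx$. Using $\Delta H=0$ I expand $\Delta\varphi_2=\Delta(H\xi)=H\,\Delta\xi+2\,\nabla H\!\cdot\!\nabla\xi$, so the integrand is supported in the transition annulus $\{\sqrt R<|x|<R\}$ where $\xi=\Psi^k$ is non-constant. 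Writing $r=|x|$ and $\rho(r)=\frac{2\ln r}{\ln R}-1$ (so $\rho'=\frac2{r\ln R}$, $\rho''=-\frac2{r^2\ln R}$), a direct computation produces in $H\Delta\xi$ terms proportional to $\Psi^{k-2}(\Psi')^2\,r^{-2}(\ln R)^{-2}$, $\Psi^{k-1}\Psi''\,r^{-2}(\ln R)^{-2}$, and $\Psi^{k-1}\Psi'\,r^{-2}(\ln R)^{-1}$, together with a term $\Psi^{k-1}\Psi'\,r^{-N}(\ln R)^{-1}$ coming from $\nabla H\!\cdot\!\nabla\xi$.

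The two technical points are the $\Psi$-power bookkeeping and the radial integration. Setting $q=\tfrac p{p-1}=\tfrac N2$ and multiplying by $\varphi_2^{-\frac1{p-1}}\le C\Psi^{-k(q-1)}$ (using $H\ge\frac{N-2}\alpha>0$ to bound $H^{-1/(p-1)}$ and $H$ bounded above in $|\Delta\varphi_2|^q$), the surviving power of $\Psi$ in the worst term $\Psi^{k-2}(\Psi')^2$ is $(k-2)q-k(q-1)=k-2q=k-\tfrac{2p}{p-1}$, which is positive precisely because $k>\tfrac{2p}{p-1}$; hence $\Psi^{\,\cdot}\le1$ and no blow-up arises as $\Psi\to0$, and similarly for the remaining terms. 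What is left are the spatial factors. Since $q=N/2$ the $r^{-2}$ terms become $r^{-N}$, and the decisive elementary computation is $\int_{\sqrt R}^R r^{-N}r^{N-1}\,dr=\int_{\sqrt R}^R\tfrac{dr}r=\tfrac12\ln R$; the $(\ln R)^{-2}$ terms then contribute $(\ln R)^{-N}\cdot\ln R=(\ln R)^{1-N}$ and the $(\ln R)^{-1}$ terms contribute $(\ln R)^{-N/2}\cdot\ln R=(\ln R)^{1-N/2}$, which are exactly the two terms in \eqref{IEF2}. The $\nabla H\!\cdot\!\nabla\xi$ contribution, carrying spatial decay $r^{-N^2/2}$, integrates to a quantity polynomially small in $R$ and is absorbed into the constant. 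The one genuine obstacle is thus the $\Psi$-power accounting: one must verify that after the cancellations every term truly has a nonnegative power of $\Psi$, which is the reason for the constraint $k>\tfrac{2p}{p-1}$ (the analogous role being played by $l>\tfrac{2p}{p-1}$ in the time integrals of $\mathcal I_1,\mathcal I_2$); once this is checked, the logarithmic radial integral does the rest.
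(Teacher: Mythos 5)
Your proof is correct and takes essentially the same route as the paper's: factor $\varphi=\varphi_1\varphi_2$, compute the time integrals exactly using $\tfrac{p}{p-1}=\tfrac{N}{2}$ and $\tfrac{2p}{p-1}=N$, expand $\Delta(H\xi)=H\Delta\xi+2\nabla H\cdot\nabla\xi$ via harmonicity of $H$, verify via $k>\tfrac{2p}{p-1}$ that every surviving power of $\Psi$ (worst case $k-2q$ with $q=\tfrac{p}{p-1}$) is positive, and reduce the spatial estimate to $\int_{\sqrt R}^{R} r^{-1}\,dr=\tfrac12\ln R$ on the transition annulus, yielding the $(\ln R)^{1-N}$ and $(\ln R)^{1-\frac N2}$ terms. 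The only cosmetic difference is the cross term $2\nabla H\cdot\nabla\xi$: the paper absorbs it into the $H\Delta\xi$-type terms via the inequality $|x|^{2-N}\le\alpha H(x)$, whereas you bound it directly as polynomially small in $R$ --- both are valid.
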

\begin{proof}[Proof of Lemma \ref{IE1}.] In view of \eqref{TF0} we obtain
\begin{equation*}\begin{split}
\mathcal{I}_1&=\left(\int_0^T \varphi_1^{-\frac{1}{p-1}}(t)\left|\varphi'_1(t)\right|^\frac{p}{p-1} dt\right) \left(\int_{\Omega^c} |\varphi_2(x)| dx \right).
\end{split}\end{equation*}
Consequently, from \eqref{TF1} with $l>\frac{2p}{p-1}$ it implies that
\begin{equation}\label{IEF3}\begin{split}
\int_0^T \varphi_1^{-\frac{1}{p-1}}(t)\left|\varphi'_1(t)\right|^\frac{p}{p-1} dt&=\int_0^T \left(1-\frac{t}{T}\right)^{-\frac{l}{p-1}}\left|lT^{-1}\left(1-\frac{t}{T}\right)^{l-1}\right|^\frac{p}{p-1} dt
\\&=\frac{(p-1)l^{\frac{p}{p-1}}}{l(p-1)-1}T^{1-\frac{p}{p-1}}
\\&=CT^{1-\frac{N}{2}},\end{split}\end{equation}since $\frac{p}{p-1}=\frac{N}{2}.$
\\Therefore, by \eqref{TF2} and \eqref{TF4} for a sufficiently large $R>1$ we have
\begin{equation}\label{IEF4}\begin{split}
 \int_{\Omega^c} |\varphi_2(x)| dx &=  \int_{\Omega^c} \left(\large\displaystyle\frac{N-2+\alpha}{\alpha}-|x|^{2-N}\right)\Psi^k\left(\frac{\ln\left(\frac{|x|}{\sqrt{R}}\right)}{\ln\left(\sqrt{R}\right)}\right)dx
\\&\stackrel{|x|=r}\leq\int_{1<r<R} \left(\frac{N-2+\alpha}{\alpha}-r^{2-N}\right)r^{N-1}dr
\\&\leq\left(\frac{N-2+\alpha}{\alpha}\right)\int_{1<r<R}r^{N-1}dr
\\&=\left(\frac{N-2+\alpha}{\alpha N}\right)R^{N}-\left(\frac{N-2+\alpha}{\alpha N}\right)
\\&\leq C R^N. \end{split}\end{equation}
From the combination of \eqref{IEF3} and \eqref{IEF4} we get \eqref{IEF1}.
\\Similarly, we arrive at
\begin{equation*}\begin{split}
&\mathcal{I}_2=\left(\int_0^T \varphi_1^{-\frac{1}{p-1}}(t)|\varphi_1''(t)|^\frac{p}{p-1} dt\right) \left(\int_{\Omega^c} |\varphi_2(x)| dx \right).
\end{split}\end{equation*}
Using \eqref{TF1} with $l>\frac{2p}{p-1}$ we get
\begin{equation}\label{Q4}\begin{split}
\int_0^T \varphi_1^{-\frac{1}{p-1}}(t)\left|\varphi''_1(t)\right|^\frac{p}{p-1} dt&=\int_0^T \left(1-\frac{t}{T}\right)^{-\frac{l}{p-1}}\left|l(l-1)T^{-2}\left(1-\frac{t}{T}\right)^{l-2}\right|^\frac{p}{p-1} dt
\\&=\frac{(p-1)(l(l-1))^\frac{p}{p-1}}{l(p-1)-p-1}T^{1-\frac{2p}{p-1}}
\\&=CT^{1-N},
\end{split}\end{equation}here we have used $\frac{2p}{p-1}=N.$
Combining \eqref{IEF4} and \eqref{Q4}  it follows \eqref{QQ1}.
\\Next, let us calculate the next integral
\begin{equation*}\label{POP5}\begin{split}
&\mathcal{I}_3=\left(\int_0^T |\varphi_1(t)| dt\right) \left(\int_{\Omega^c} \varphi_2^{-\frac{1}{p-1}}(x)|\Delta\varphi_2(x)|^\frac{p}{p-1} dx \right).
\end{split}\end{equation*}
From \eqref{TF1}, it is  easy to verify by a direct calculation that
\begin{equation}\label{IEF5}\begin{split}
\int_0^T|\varphi_1| dt&=\int_0^T\left(1-\frac{t}{T}\right)^ldt
=\frac{1}{l+1}T.\end{split}\end{equation}
Since, the function $\xi(x)$ is a radial, we obtain

\begin{equation}\label{Q1}\begin{split}
|\Delta\xi(r)|&\leq C\left[\frac{1}{r^2\ln^2\sqrt{R}}\Psi^{k-2}\left(\frac{\ln\left(\frac{r}{\sqrt{R}}\right)}{\ln\left(\sqrt{R}\right)}\right)+\frac{1}{r^2\ln\sqrt{R}}\Psi^{k-1}\left(\frac{\ln\left(\frac{r}{\sqrt{R}}\right)}{\ln\left(\sqrt{R}\right)}\right)\right],
\end{split}\end{equation} where $r=|x|=(x_1^2+x_2^2+...+x_n^2)^\frac{1}{2}$ and $C>0$ is an arbitrary constant.
\\Therefore, taking into account the function $H(x)$ is harmonic and remaining \eqref{Q1}, we arrive at
\begin{equation}\label{Q2}\begin{split}
|\Delta\varphi_2(x)|&=|\Delta\left[H(x)\xi(x)\right]|\\&\leq H(x)|\Delta\left[\xi(x)\right]|+2|\nabla H(x)||\nabla\xi(x)|
\\&\leq CH(x)\left[\frac{1}{|x|^2\ln^2\sqrt{R}}\Psi^{k-2}\left(\frac{\ln\left(\frac{|x|}{\sqrt{R}}\right)}{\ln\left(\sqrt{R}\right)}\right)+\frac{1}{|x|^2\ln\sqrt{R}}\Psi^{k-1}\left(\frac{\ln\left(\frac{|x|}{\sqrt{R}}\right)}{\ln\left(\sqrt{R}\right)}\right)\right]\\&+\frac{2k(N-2)}{|x|^2\ln\sqrt{R}}|x|^{2-N}\Psi^{k-1}\left(\frac{\ln\left(\frac{|x|}{\sqrt{R}}\right)}{\ln\left(\sqrt{R}\right)}\right)
\\&\leq  CH(x)\left(\frac{1}{|x|^2\ln^2\sqrt{R}}\xi^{\frac{k-2}{k}}\left(x\right)+\frac{1}{|x|^2\ln\sqrt{R}}\xi^{\frac{k-1}{k}}\left(x\right)\right),
\end{split}\end{equation}
thanks to $|x|^{2-N}\leq \alpha H(x).$
\\Consequently, we get
\begin{equation*}\label{Q3}\begin{split}
&\int_{\Omega^c} \varphi_2^{-\frac{1}{p-1}}(x)|\Delta\varphi_2(x)|^\frac{p}{p-1} dx
\\&\leq C\int_{\Omega^c} \left|H(x)\xi(x)\right|^{-\frac{1}{p-1}}\left|H(x)\left[\frac{1}{|x|^2\ln^2\sqrt{R}}\xi^{\frac{k-2}{k}}\left(x\right)+\frac{1}{|x|^2\ln\sqrt{R}}\xi^{\frac{k-1}{k}}\left(x\right)\right]
\right|^\frac{p}{p-1}   dx.\end{split}\end{equation*}
Using Schwarz's inequality and the following inequality
\begin{equation*}
(a+b)^m\leq 2^{m-1}(a^m+b^m),\,\,\,a\geq0,\,b\geq0,\,m=\frac{p}{p-1},\end{equation*}we obtain
\begin{equation*}\begin{split}
\int_{\Omega^c} \varphi_2^{-\frac{1}{p-1}}(x)|\Delta\varphi_2(x)|^\frac{p}{p-1} dx
&\leq C\int_{\Omega^c} H(x) |\xi(x)|^{-\frac{1}{p-1}}\left[\frac{1}{|x|^2\ln^2\sqrt{R}}\left|\xi^{\frac{k-2}{k}}\right|\left(x\right)\right]^\frac{p}{p-1}   dx\\& +C\int_{\Omega^c} H(x)|\xi(x)|^{-\frac{1}{p-1}}\left[\frac{1}{|x|^2\ln\sqrt{R}}\left|\xi^{\frac{k-1}{k}}\right|\left(x\right)\right]^\frac{p}{p-1}   dx.\end{split}\end{equation*}
Therefore, noting that $k>\frac{2p}{p-1}$ and \eqref{TF4} we arrive at
 \begin{equation*}\begin{split}
\int_{\Omega^c} \varphi_2^{-\frac{1}{p-1}}(x)|\Delta\varphi_2(x)|^\frac{p}{p-1} dx
&\leq C\int_{\sqrt{R}<|x|<R} H(x)\left[\frac{1}{|x|^2\ln^2\sqrt{R}}\right]^\frac{p}{p-1}dx
\\&+C\int_{\sqrt{R}<|x|<R}H(x)\left[\frac{1}{|x|^2\ln\sqrt{R}}\right]^\frac{p}{p-1}dx.\end{split}\end{equation*}
At this stage, recalling $p=\frac{N}{N-2}$, we calculate the right-hand side of the last estimate separately
 \begin{equation}\label{PPQ}\begin{split}
\int_{\sqrt{R}<|x|<R} H(x)\left[\frac{1}{|x|^2\ln^2\sqrt{R}}\right]^\frac{p}{p-1}dx&\stackrel{|x|=r}{=}\left[\frac{1}{\ln^2\sqrt{R}}\right]^\frac{N}{2}\int_{\sqrt{R}<r<R}\left(\large\displaystyle\frac{N-2+\alpha}{\alpha}-r^{2-N}\right)r^{-1}dr
\\&=\left[\frac{1}{\ln^2\sqrt{R}}\right]^\frac{N}{2} \int_{\sqrt{R}<r<R} \left(\large\displaystyle\frac{N-2+\alpha}{\alpha}r^{-1}-r^{1-N}\right)dr
\\&\leq C(\ln R)^{1-N}\end{split}\end{equation}
and
\begin{equation}\label{PPQ1}\begin{split}
\int_{\sqrt{R}<|x|<R}H(x)\left[\frac{k(N-2)}{|x|^2\ln\sqrt{R}}\right]^\frac{p}{p-1}dx&\leq C(\ln R)^{-\frac{N}{2}+1}.\end{split}\end{equation}
Consequently, due to the last inequalities, we obtain
 \begin{equation}\label{Q14}\begin{split}
\int_{\Omega^c} \varphi_2^{-\frac{1}{p-1}}(x)&|\Delta\varphi_2(x)|^\frac{p}{p-1} dx\leq C\left((\ln R)^{1-N}+(\ln R)^{1-\frac{N}{2}}\right).\end{split}\end{equation}
Therefore, combining \eqref{IEF5} and \eqref{Q14} we get \eqref{IEF2}, which completes the proof.\end{proof}
Next, we will introduce the test function in the following form
\begin{equation}\label{QWE1}
 \psi=\psi_1(t)\psi_2(x)=\left(1-\frac{t}{T}\right)^lH(x)\Psi\left(\frac{|x|}{R}\right),\,\, l>\frac{2p}{p-1},
\end{equation}
where $T,R>0$ and $H(x)$ is defined by \eqref{TF5}.

The function ${\Psi\in C^2(\mathbb{R}_+)}$ is the standard cut-off function given by
\begin{equation}\label{TTQ2}
\Psi\left(\frac{|x|}{R}\right)=
 \begin{cases}
   1 &\text{if\,\,\,\, $\,\,\,\, 0\leq |x|\leq R$},\\
   \searrow &\text{if\,\,\,\, $R<|x|<2R$},\\
   0 &\text{if\,\,\,\, $2R\leq|x|$}.
 \end{cases}\end{equation}

\begin{corollary}\label{C1} Let $N\geq3$ and $p=\frac{\lambda-2}{\lambda}$. Then for sufficiently large $T, R$ there hold true
\begin{equation}\label{IEFQ1}\begin{split}
&\mathcal{J}_1=\int_Q \psi^{-\frac{1}{p-1}}|\psi_t|^\frac{p}{p-1} dxdt \leq CT^{\frac{\lambda}{2}}R^{N},
\end{split}\end{equation}
\begin{equation}\label{QQQ1}\begin{split}
&\mathcal{J}_2=\int_Q\psi^{-\frac{1}{p-1}}|\psi_{tt}|^\frac{p}{p-1}dx dt\leq CT^{\lambda-1}R^{N}
\end{split}\end{equation}
and
\begin{equation}\label{IEFQ2}\begin{split}
\mathcal{J}_3&=\int_Q\psi^{-\frac{1}{p-1}}|\Delta\psi|^\frac{p}{p-1}dx dt\leq CTR^{N+\lambda-2},
\end{split}\end{equation}
where $C$ are positive constants independent of $T$ and $R$.
\end{corollary}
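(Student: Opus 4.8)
The plan is to mirror the computation of Lemma \ref{IE1}, exploiting the product structure $\psi=\psi_1(t)\psi_2(x)$ to split each $\mathcal{J}_i$ into a temporal factor and a spatial factor, but now with the polynomial cut-off $\Psi(|x|/R)$ in place of the logarithmic one, so that the relevant spatial support is the annulus $R<|x|<2R$ rather than $\sqrt{R}<|x|<R$. The arithmetic hinges on the single identity
$$\frac{p}{p-1}=1-\frac{\lambda}{2},\qquad\text{equivalently}\qquad \frac{2p}{p-1}=2-\lambda,$$
which follows from $p=\frac{\lambda-2}{\lambda}=1-\frac{2}{\lambda}$; since $2-N<\lambda<0$ we have $\frac{p}{p-1}>1$. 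I will also use throughout that $H$ lies between positive constants on $\Omega^c$: indeed $H(x)=\frac{N-2+\alpha}{\alpha}-|x|^{2-N}$ is increasing in $|x|$, so $\frac{N-2}{\alpha}\le H(x)\le\frac{N-2+\alpha}{\alpha}$, whence both $H$ and $H^{-1/(p-1)}$ are harmless bounded factors.

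For $\mathcal{J}_1$ and $\mathcal{J}_2$ the spatial factor is simply $\int_{\Omega^c}|\psi_2|\,dx$, which is bounded by $CR^N$ exactly as in \eqref{IEF4}, because $\Psi(|x|/R)$ is supported in $|x|\le 2R$ and $H$ is bounded. The temporal factors are computed verbatim as in \eqref{IEF3} and \eqref{Q4}, under the standing assumption $l>\frac{2p}{p-1}$ that guarantees convergence of the one-dimensional integrals; they produce $T^{1-\frac{p}{p-1}}=T^{\lambda/2}$ and $T^{1-\frac{2p}{p-1}}=T^{\lambda-1}$, respectively. Multiplying the two factors yields \eqref{IEFQ1} and \eqref{QQQ1} at once.

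The substantive estimate is $\mathcal{J}_3$. Its temporal factor $\int_0^T|\psi_1|\,dt=\frac{T}{l+1}$ is immediate from \eqref{IEF5}. For the spatial factor I expand $\Delta\psi_2=\Delta(H\Psi)=H\,\Delta\Psi+2\nabla H\!\cdot\!\nabla\Psi$, using $\Delta H=0$. On the annulus $R<|x|<2R$ one has $|\nabla\Psi|\le CR^{-1}$ and $|\Delta\Psi|\le CR^{-2}$, while $|\nabla H|=(N-2)|x|^{1-N}\le CR^{1-N}$, so the gradient cross-term is of order $R^{-N}$ and is dominated by the $H\,\Delta\Psi$ term of order $R^{-2}$ for $N>2$. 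Writing the cut-off with the usual power $\Psi=\tilde\Psi^{\,k}$, $k>\frac{2p}{p-1}$, so that $|\Delta\Psi|\lesssim R^{-2}\bigl(\Psi^{\frac{k-2}{k}}+\Psi^{\frac{k-1}{k}}\bigr)$, I then estimate the integrand $\psi_2^{-1/(p-1)}|\Delta\psi_2|^{p/(p-1)}$ following the scheme of \eqref{Q2}--\eqref{Q14}. The choice $k>\frac{2p}{p-1}$ makes the net power of $\Psi$ nonnegative, so the factor $\Psi^{-1/(p-1)}$ is absorbed and the integrand is bounded by $CR^{-2p/(p-1)}$ on the support. Since the annulus has volume $\sim R^N$, the spatial integral is at most $CR^{N-\frac{2p}{p-1}}=CR^{N+\lambda-2}$, and multiplying by $\frac{T}{l+1}$ gives \eqref{IEFQ2}.

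The only delicate point — and the step I expect to require the most care — is the integrability of $\psi_2^{-1/(p-1)}|\Delta\psi_2|^{p/(p-1)}$ across the edge of the cut-off, where $\Psi\to 0$ and the negative power $\Psi^{-1/(p-1)}$ threatens to blow up. This is precisely what the power condition $k>\frac{2p}{p-1}$ remedies, making the combined exponent $-\frac{1}{p-1}+\frac{(k-2)p}{k(p-1)}\ge 0$ nonnegative; one must additionally check that $H^{-1/(p-1)}$ creates no singularity, which is guaranteed by the uniform positive lower bound on $H$ noted above. Everything else is a routine transcription of Lemma \ref{IE1}, with the annular support $R<|x|<2R$ and the exponent identity $\frac{2p}{p-1}=2-\lambda$ replacing the logarithmic bookkeeping.
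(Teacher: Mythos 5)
Your proposal is correct and takes essentially the same route as the paper's own proof: the product splitting $\psi=\psi_1(t)\psi_2(x)$, the temporal integrals \eqref{IEF3} and \eqref{Q4} evaluated with $\frac{p}{p-1}=1-\frac{\lambda}{2}$ (equivalently $\frac{2p}{p-1}=2-\lambda$) to get $T^{\lambda/2}$ and $T^{\lambda-1}$, the bound $\int_{\Omega^c}|\psi_2|\,dx\le CR^N$, and the scaling $|x|=yR$ over the annulus $R<|x|<2R$ producing $R^{N-\frac{2p}{p-1}}=R^{N+\lambda-2}$ for $\mathcal{J}_3$. If anything, you are more careful than the paper at the points it glosses over: the paper's $\mathcal{J}_3$ computation silently drops the factor $H$ and the cross-term $2\nabla H\cdot\nabla\Psi$, and simply asserts finiteness of $\int_{1<|y|<2}\Psi^{-\frac{1}{p-1}}(|y|)\,|\Psi''(|y|)|^{\frac{p}{p-1}}dy$, whereas you justify all three via the uniform bounds $\frac{N-2}{\alpha}\le H\le\frac{N-2+\alpha}{\alpha}$, the observation that the cross-term is $O(R^{-N})$ and hence dominated by the $O(R^{-2})$ term, and the explicit choice $\Psi=\tilde\Psi^{\,k}$ with $k>\frac{2p}{p-1}$ absorbing the negative power $\Psi^{-\frac{1}{p-1}}$.
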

\begin{proof}[Proof of Corollary \ref{C1}.]
According to the test function $\psi=\psi_1(t)\psi_2(x)$, we have
\begin{equation*}\begin{split}
\mathcal{J}_1&=\left(\int_0^T \psi_1^{-\frac{1}{p-1}}(t)\left|\psi'_1(t)\right|^\frac{p}{p-1} dt\right) \left(\int_{\Omega^c} |\psi_2(x)| dx \right)
\end{split}\end{equation*}and
\begin{equation*}\begin{split}
\mathcal{J}_2&=\left(\int_0^T \psi_1^{-\frac{1}{p-1}}(t)\left|\psi''_1(t)\right|^\frac{p}{p-1} dt\right) \left(\int_{\Omega^c} |\psi_2(x)| dx \right).
\end{split}\end{equation*}
Using \eqref{IEF3} and \eqref{Q4} to the first part of the last integrals with $$\frac{p}{p-1}=-\frac{\lambda-2}{2}$$ we obtain
\begin{equation}\label{IEF30}\begin{split}
\int_0^T \psi_1^{-\frac{1}{p-1}}(t)\left|\psi'_1(t)\right|^\frac{p}{p-1} dt&=CT^{\frac{\lambda}{2}},\end{split}\end{equation}
\begin{equation}\label{Q40}\begin{split}
\int_0^T \psi_1^{-\frac{1}{p-1}}(t)\left|\psi''_1(t)\right|^\frac{p}{p-1} dt=CT^{\lambda-1}.
\end{split}\end{equation}
In addition, we have
\begin{equation*}\begin{split}
 \int_{\Omega^c} |\psi_2(x)| dx &=  \int_{\Omega^c} \left(\large\displaystyle\frac{N-2+\alpha}{\alpha}-|x|^{2-N}\right)\Psi\left(\frac{|x|}{R}\right)dx
\\&\stackrel{|x|=r}\leq\int_{1< r<2} \left(\frac{N-2+\alpha}{\alpha}-r^{2-N}\right)r^{N-1}dr
\\&\leq\left(\frac{N-2+\alpha}{\alpha}\right)\int_{1< r<2}r^{N-1}dr
\\&\leq C R^{N}. \end{split}\end{equation*}
From the combination of \eqref{IEF30} and \eqref{Q40} with \eqref{Q4} we get the estimates for $\mathcal{J}_1, \mathcal{J}_2$, respectively.
\\Next, we consider the integral $\mathcal{J}_3$. In view of \eqref{PPQ} and  \eqref{PPQ1} with $$\frac{p}{p-1}=-\frac{\lambda-2}{2}$$  it follows that
 \begin{equation*}\begin{split}
\int_{\Omega^c}\psi_2^{-\frac{1}{p-1}}|\Delta\varphi_2|^\frac{p}{p-1}dx&=\int_{R<|x|<2R}\Psi^{-\frac{1}{p-1}}\left(\frac{|x|}{R}\right)\left|\Delta\Psi\left(\frac{|x|}{R}\right)\right|^\frac{p}{p-1}dx
\\&=R^{-\frac{2 p}{p-1}}\int_{R<|x|<2R}\Psi^{-\frac{1}{p-1}}\left(\frac{|x|}{R}\right)\left|\Psi''\left(\frac{|x|}{R}\right)\right|^\frac{p}{p-1}dx
\\&\stackrel{|x|=yR}=R^{-\frac{2 p}{p-1}+N}\int_{1<|y|<2}\Psi^{-\frac{1}{p-1}}\left(|y|\right)\left|\Psi''\left(|y|\right)\right|^\frac{p}{p-1}dy
\\&\leq C R^{-\frac{ 2p}{p-1}+N}
\\&= C R^{N+\lambda-2}.\end{split}\end{equation*}
Combining the expression \eqref{IEF5} with the last estimates, we get \eqref{IEFQ2}.\end{proof}

\section{Proof of basic theorems} In this subsection we will prove the main theorems in detail.

Note that in a similar way one can prove that the results of Theorem \ref{TT1} remain true if $\alpha\rightarrow+\infty$ and $\alpha=0$. To do this, it suffices to consider instead of Robin harmonic function $H(x)$, the Dirichlet harmonic function $1-|x|^{2-n}$ and some constant $C=const>0$, respectively.
\begin{proof}[Proof of Theorem \ref{TT1}.]\textbf{(i)} We argue by contradiction by supposing that a global weak solution $u\in L^p_\text{loc}(\overline{\Omega^c})$ to problem \eqref{P1}. From Definition \ref{WS1} it yields
\begin{equation*}\begin{split}
&\int_{\Omega^c}u^{p}\varphi_2  dx +\int_\Gamma f\varphi_2 d\sigma \leq \int_{\Omega^c} u|\Delta\varphi_2| dx.
\end{split}\end{equation*}
Hence, using H\"{o}lder's and  the $\varepsilon$-Young inequality with $\varepsilon=p$, we arrive at
\begin{equation*}\begin{split}
\int_{\Omega^c} u|\Delta\varphi_2| dx&=\int_{\Omega^c} u|\varphi_2|^{\frac{1}{p}}|\Delta\varphi_2| |\varphi_2|^{-\frac{1}{p}} dx
\\&\leq \biggl(\int_{\Omega^c} u^p|\varphi_2| dx\biggr)^\frac{1}{p}\biggl(\int_{\Omega^c} \varphi^{-\frac{1}{p-1}}|\Delta\varphi_2|^{\frac{p}{p-1}} dx\,\,\biggr)^\frac{p-1}{p}.
\\&\leq \int_{\Omega^c}u^{p}\varphi_2  dx
+(p-1)p^{-\frac{p}{p-1}}\int_{\Omega^c}\varphi_2^{-\frac{1}{p-1}}\left|\Delta\varphi_2\right|^{\frac{p}{p-1}}dx .
\end{split}\end{equation*}
From the last inequality, we obtain
\begin{equation*}\begin{split}
\int_\Gamma f\varphi_2 d\sigma \leq C\int_{\Omega^c}\varphi_2^{-\frac{1}{p-1}}\left|\Delta\varphi_2\right|^{\frac{p}{p-1}}dx.
\end{split}\end{equation*}
In view of  \eqref{Q14} with $R\to \infty$, we get the contradiction $$\int_{\partial\Omega}f(x)d\sigma>0.$$

\textbf{(ii)} The proof is done by contradiction. Assume that there exists a global in time weak solution $u\in L^p_\text{loc}([0,\infty)\times\overline{\Omega^c})$  of the problem \eqref{P2}. In view of Definition \ref{WS2}, we have
\begin{equation}\label{MM1}\begin{split}
&\int_Q|u|^{p}\varphi  dx dt +\int_{\Omega^c} u_0(x)\varphi(0,x) dx+\int_\Gamma f\varphi d\sigma dt
\leq \int_Q |u||\varphi_t|dx dt+\int_Q |u||\Delta\varphi| dx dt.
\end{split}\end{equation}
\\Therefore, by H\"{o}lder's inequality, we obtain
\begin{equation*}\begin{split}
 \int_Q |u||\varphi_t|dx dt&=\int_Q |u|\varphi^{\frac{1}{p}}|\varphi_t|\varphi^{-\frac{1}{p}}dx dt\\&\leq\biggl(\int_Q |u|^p\varphi dx dt\biggr)^\frac{1}{p}\biggl(\,\,\underbrace{\int_Q \varphi^{-\frac{1}{p-1}}|\varphi_t|^\frac{p}{p-1}dx dt}_{\mathcal{I}_1}\,\,\biggr)^\frac{p-1}{p}
\end{split}\end{equation*}and
\begin{equation*}\begin{split}
\int_Q|u||\Delta\varphi|dx dt&=\int_Q |u|\varphi^{\frac{1}{p}}|\varphi_t|\varphi^{-\frac{1}{p}}dx dt\\&\leq \biggl(\int_Q |u|^p\varphi dx dt\biggr)^\frac{1}{p}\biggl(\,\,\underbrace{\int_Q\varphi^{-\frac{1}{p-1}}|\Delta\varphi|^\frac{p}{p-1}dx dt}_{\mathcal{I}_3}\,\,\biggr)^\frac{p-1}{p}.
\end{split}\end{equation*}
Using the $\varepsilon$-Young inequality with $\displaystyle\varepsilon=\frac{p}{2}$ in the last inequalities, it follows that
\begin{equation*}\begin{split}
\int_Q|u|\left|\varphi_t\right|dx dt&\leq\frac{1}{2}\int_Q|u|^{p}\varphi  dx dt
+\frac{p-1}{p}\biggl(\frac{p}{2}\biggr)^{-\frac{1}{p-1}}\underbrace{\int_Q\varphi^{-\frac{1}{p-1}}\left|\varphi_t\right|^{\frac{p}{p-1}}dx dt}_{\mathcal{I}_1}. \end{split}\end{equation*}
Similarly, one obtains
\begin{equation*}\begin{split}
\int_Q|u|\left|\Delta\varphi_t\right|dx dt&\leq\frac{1}{2}\int_Q|u|^{p}\varphi  dx dt
+\frac{p-1}{p}\biggl(\frac{p}{2}\biggr)^{-\frac{1}{p-1}}\underbrace{\int_Q\varphi^{-\frac{1}{p-1}}\left|\Delta\varphi\right|^{\frac{p}{p-1}}dx dt}_{\mathcal{I}_3}.
\end{split}\end{equation*}
Consequently, we can rewrite \eqref{MM1} as
\begin{equation}\label{PL5}\begin{split}
\int_{\Omega^c}u_0(x)\varphi(0,x)dx+\int_\Gamma f\varphi d\sigma dt\leq C(p)(\mathcal{I}_1+\mathcal{I}_3),
\end{split}\end{equation}where $\displaystyle C(p)=\frac{p-1}{p}\left(\frac{p}{2}\right)^{-\frac{1}{p-1}}.$

Then, from Lemma \ref{X0} and Lemma \ref{IE1}, for $T=R^j, j>0,$ we get
\begin{equation*}\label{PL6}\begin{split}
\int_{\partial \Omega}f\xi(x) d\sigma&\leq CR^{-j}\int_{\Omega^c}|u_0(x)|\varphi(0,x)dx
\\&+ C(p)\left(R^{N(-\frac{j}{2}+1)}+(\ln R)^{-N+1}+(\ln R)^{-\frac{N}{2}+1}\right),
\end{split}\end{equation*} where $C>0$ is an arbitrary constant.

Since
\begin{align*}\lim\limits_{R\to\infty}H(x)\xi(x)&=\frac{N-2}{\alpha}\lim\limits_{R\to\infty}\Psi^k\left(\frac{\ln\left(\frac{|x|}{\sqrt{R}}\right)}{\ln\left(\sqrt{R}\right)}\right)\\& =\frac{N-2}{\alpha}\Psi^k\left(-1\right)\\&=\frac{N-2}{\alpha},\,x\in\partial\Omega,
\end{align*}
taking $j>2$ and passing to the limit as $R\to\infty$ in the above inequality one can obtains
\begin{align}\label{QQQ}&\lim\limits_{R\to\infty}\int_{\partial \Omega}f\xi(x) d\sigma=\int_{\partial \Omega}fd\sigma\leq 0,
\end{align}
which is a contradiction with $$\int_{\partial \Omega}fd\sigma>0.$$

\textbf{(iii)} At this stage, following the same technique as used in the proof of the previous case, we deduce that
\begin{equation}\label{PW1}\begin{split}
\int_{\Omega^c} u_1(x)\varphi(0,x) dx&-\int_{\Omega^c} u_0(x)\varphi_t(0,x) dx+\int_\Gamma f\varphi d\sigma dt\leq  C(p)(\mathcal{I}_2+\mathcal{I}_3).
\end{split}\end{equation}
Using Lemma \ref{X0} and Lemma \ref{IE1}, for $T=R^j, j>0,$ we obtain
\begin{equation*}\begin{split}
\int_{\partial \Omega}f\xi(x) d\sigma&\leq CR^{-j}\int_{\Omega^c} |u_1(x)|\varphi(0,x) dx+CR^{-j}\int_{\Omega^c} |u_0(x)|\varphi_t(0,x) dx\\&+C(p)\left(R^{N(-j+1)}+(\ln R)^{-N+1}
+(\ln R)^{-\frac{N}{2}+1}\right).
\end{split}\end{equation*}
Finally, taking $j>1$ and passing to the limit $R\to\infty$ and noting \eqref{QQQ}, we get a contradiction with $$\int_{\partial \Omega}fd\sigma>0.$$

\textbf{(iv)} Next, acting in the same way as in the above case, we get the following estimate
\begin{equation*}\begin{split}
\int_{\Omega^c} (u_0(x)+u_1(x))\varphi(0,x) dx&-\int_{\Omega^c} u_0(x)\varphi_t(0,x) dx+\int_\Gamma f\varphi d\sigma dt\\&\leq  C(p)(\mathcal{I}_1+\mathcal{I}_2+\mathcal{I}_3),
\end{split}\end{equation*}where $C(p)=\frac{p-1}{p}\biggl(\frac{p}{3}\biggr)^{-\frac{1}{p-1}}.$
\\From the results of Lemma \ref{X0} and Lemma \ref{IE1} with changing $T=R^j, j>0,$ we arrive at
\begin{equation*}\begin{split}
\int_{\partial \Omega}f\xi(x) d\sigma&\leq CR^{-j}\int_{\Omega^c} (|u_0(x)|+|u_1(x)|)\varphi(0,x) dx+CR^{-j}\int_{\Omega^c} |u_0(x)|\varphi_t(0,x) dx\\&+C(p)\left(R^{N(-\frac{j}{2}+1)}+R^{N(-j+1)}+(\ln R)^{-N+1}
+(\ln R)^{-\frac{N}{2}+1}\right).
\end{split}\end{equation*}
Hence, taking $j>2$ and passing to the limit $R\to\infty$ in the last inequality we get a contradiction with $$\int_{\partial \Omega}fd\sigma>0,$$ which completes the proof.\end{proof}

\begin{proof}[Proof of Theorem \ref{TT2}.]\textbf{(i)} The proof also will be done by contradiction. Suppose that $u\in L^p_\text{loc}([0,\infty)\times\overline{\Omega^c})$  is a global weak solution to \eqref{P2}.

Therefore, from Definition \ref{WS2} and using H\"{o}lder's, $\varepsilon$-Young's inequality with fact that $f\geq0$, we obtain instead of the estimate \eqref{PL5}
\begin{equation*}\label{QT}\begin{split}
\int_{\Omega^c}u_0(x)\psi(0,x)dx\leq C(p)\left(\int_Q\psi^{-\frac{1}{p-1}}\left|\psi_t\right|^{\frac{p}{p-1}}dx dt+\int_Q\psi^{-\frac{1}{p-1}}\left|\Delta\psi\right|^{\frac{p}{p-1}}dx dt\right),
\end{split}\end{equation*}where $\displaystyle C(p)=\frac{p-1}{p}\left(\frac{p}{2}\right)^{-\frac{1}{p-1}}$.
\\Hence, in view of $p=\frac{\lambda-2}{\lambda}$ from  Corollary \ref{C1}, one obtains
\begin{equation}\label{QT6}\begin{split}
\int_{\Omega^c}u_0(x)\psi(0,x)dx&\leq C(\lambda)(\mathcal{J}_1+\mathcal{J}_3)
\\&\leq C(\lambda)\left(T^{\frac{\lambda}{2}}R^{N}+TR^{N+\lambda-2}\right),
\end{split}\end{equation}here $\displaystyle C(\lambda)=\frac{2}{2-\lambda}\left(\frac{\lambda-2}{2\lambda}\right)^{\frac{\lambda}{2}}$ is a positive constant.

At this stage,  recalling \eqref{QWE1} with \eqref{TTQ2}, we get
\begin{equation}\label{QT2}\begin{split}
\int_{\Omega^c}u_0(x)\psi(0,x)dx&=\int_{\Omega^c}u_0(x)\left(\frac{N-2+\alpha}{\alpha}-|x|^{2-N}\right)\Psi\left(\frac{|x|}{R}\right)dx
\\&\geq \frac{A}{\alpha}\int\limits^{R}_{{R}^\frac{1}{2}}|x|^{\lambda}dx
\\&\geq \frac{A}{\alpha} R^{\frac{\lambda+N}{2}}>0.
\end{split}\end{equation}
Next, using the last estimate and changing the variable $T=R^j$, we can rewrite \eqref{QT6} in the following form
\begin{equation}\label{QT3}\begin{split}
0<A &\leq \alpha C(\lambda)R^{-\frac{\lambda+N}{2}}\left(R^\frac{j\lambda}{2}R^{N}+R^jR^{N+\lambda-2}\right)
\\&=\alpha C(\lambda)\left(R^{\frac{j\lambda-\lambda+N}{2}}+R^{\frac{2j+\lambda+N-4}{2}}\right),
\end{split}\end{equation} where $C(\lambda)>0$ is a positive constant independent of $R.$

Let us assume that
$$\left\{\begin{array}{l}
\lambda<0,\,\,\,\,\,\,\,\,\,\,\,\,\,\,\text{if}\,\,\,N=3,4,\\{}\\
\lambda<4-N,\,\,\text{if}\,\,N\geq 5.
\end{array}\right.$$
Then, choosing $$0<j<\min\biggl\{1-\frac{N}{\lambda}, \frac{4-N-\lambda}{2}\biggr\}$$ we make sure that $$\frac{j\lambda-\lambda+N}{2}<0\,\,\,\text{and}\,\,\,\frac{2j+\lambda+N-4}{2}<0.$$
Note that since $$1-\frac{N}{\lambda}>0\,\,\,\,\text{and} \,\,\,\,\frac{4-N-\lambda}{2}>0,$$ the set $\left(0,\min\biggl\{1-\frac{N}{\lambda}, \frac{4-N-\lambda}{2}\biggr\}\right)$ is not empty.

Hence, passing to the limit $R\to\infty$ in \eqref{QT3} we get a contradiction with $A>0$.
\\\textbf{(ii)} As previously, we argue by contradiction.
Assume that $u\in L^p_\text{loc}([0,\infty)\times\overline{\Omega^c})$  is a global weak solution to \eqref{P3}. Following a similar argument as that used in the proof of part $\text{(i)}$, we obtain
\begin{equation*}\begin{split}
\int_{\Omega^c} u_1(x)\psi(0,x) dx-\int_{\Omega^c} u_0(x)\psi_t(0,x) dx&\leq C(\lambda)(\mathcal{J}_2+\mathcal{J}_3)
\\&\leq C(\lambda)\left(T^{{\lambda-1}}R^{N}+TR^{N+\lambda-2}\right).
\end{split}\end{equation*}
Next, from the elementary calculation
\begin{equation}\label{u01}
-\int_{\Omega^c} u_0(x)\psi_t(0,x) dx=CT^{-1}\int_{\Omega^c} u_0(x)\psi(0,x) dx,
\end{equation} we get
\begin{equation*}\begin{split}
\int_{\Omega^c} u_0(x)\psi(0,x) dx&\leq C(\lambda)\left(T^{{\lambda}}R^{N}+T^2R^{N+\lambda-2}\right)-T\int_{\Omega^c} u_1(x)\psi(0,x) dx.
\end{split}\end{equation*}
Using \eqref{QT2}
\begin{equation*}\begin{split}
0<\frac{A}{\alpha} R^{\frac{\lambda+N}{2}}&\leq C(\lambda)\left(T^{{\lambda}}R^{N}+T^2R^{N+\lambda-2}\right)-T\int_{\Omega^c} u_1(x)\psi_2(x) dx\end{split}\end{equation*}
and changing $T=R^j,\,\,j>0$, we deduce that
\begin{equation*}\begin{split}
0<A &\leq \alpha C(\lambda)R^{-\frac{\lambda+N}{2}}\left(R^{j\lambda+N}+R^{2j+N+\lambda-2}\right)-\alpha R^j\int_{\Omega^c} u_1(x)\psi_2(x) dx,
\end{split}\end{equation*}
which is
\begin{equation}\label{+}\begin{split}
0<A &\leq \alpha C(\lambda)\left(R^{\frac{2j\lambda- \lambda+N}{2}}+R^{\frac{4j+ \lambda+N-4}{2}}\right)-\alpha R^\frac{2j-\lambda-N}{2}\int_{\Omega^c} u_1(x)\psi_2(x) dx,
\end{split}\end{equation} where $C(\lambda)>0$ is a positive constant independent of $R.$

Let us assume that
$$\left\{\begin{array}{l}
\lambda<0,\,\,\,\,\,\,\,\,\,\,\,\,\,\,\text{if}\,\,\,N=3,4,\\{}\\
\lambda<4-N,\,\,\text{if}\,\,N\geq 5.
\end{array}\right.$$
Choosing $$0<j<\min\biggl\{\frac{\lambda-N}{2\lambda}, \frac{4-N-\lambda}{4}, \frac{\lambda+N}{2}\biggr\}$$ we make sure that $$\frac{2j-\lambda-N}{2}<0,\,\,\frac{2j\lambda- \lambda+N}{2} <0\,\,\,\text{and}\,\,\,\frac{4j+ \lambda+N-4}{2}<0.$$
Due to the fact that $$\frac{\lambda+N}{2}>0,\,\,\frac{\lambda-N}{2\lambda}>0\,\,\,\,\text{and} \,\,\,\,\frac{4-N-\lambda}{4}>0,$$ the set $\left(0,\min\biggl\{\frac{\lambda-N}{2\lambda}, \frac{4-N-\lambda}{4}, \frac{\lambda+N}{2}\biggr\}\right)$ is not empty.

Finally, passing to the limit $R\to\infty$ in \eqref{+} we get a contradiction with $A>0$.
\\\textbf{(iii)} Suppose that $u\in L^p_\text{loc}([0,\infty)\times\overline{\Omega^c})$  is a global weak solution to \eqref{P2}. Continuing exactly as in the proof part $\textbf{(i)}$, we arrive at
\begin{equation*}\label{QT11}\begin{split}
\int_{\Omega^c} (u_0(x)+u_1(x))\psi(0,x) dx&-\int_{\Omega^c} u_0(x)\psi_t(0,x) dx\leq C(\lambda)(\mathcal{J}_1+\mathcal{J}_2+\mathcal{J}_3).
\end{split}\end{equation*}
From \eqref{u01}, it follows that
\begin{equation*}\label{QTR11}\begin{split}
\int_{\Omega^c} (u_0(x)+u_1(x))\psi(0,x) dx&+CT^{-1}\int_{\Omega^c} u_0(x)\psi(0,x) dx\leq C(\lambda)(\mathcal{J}_1+\mathcal{J}_2+\mathcal{J}_3).
\end{split}\end{equation*}
Therefore, in view of \eqref{QT2} and Corollary \ref{C1}, we have
\begin{equation*}\begin{split}
\left(1+\frac{1}{T}\right)\frac{A}{\alpha} R^{\frac{\lambda+N}{2}}&\leq C(\lambda)\left(T^{\frac{\lambda}{2}}R^{N}+T^{{\lambda-1}}R^{N}+TR^{N+\lambda-2}\right)\\&-\int_{\Omega^c} u_1(x)\psi_2(x) dx.\end{split}\end{equation*}
At this stage, using  and changing the variable $T=R^j$ in the last inequality, we obtain
\begin{equation*}\begin{split}
\left(1+\frac{1}{R^j}\right)A &\leq \alpha C(\lambda)R^{-\frac{\lambda+N}{2}}\left(R^{\frac{\lambda}{2}j+N}+R^{(\lambda-1)j+N}+R^{j+N+\lambda-2}\right)\\&-\alpha R^{-\frac{\lambda+N}{2}}\int_{\Omega^c} u_1(x)\psi_2(x) dx.\end{split}\end{equation*}
After doing some elementary calculations, we arrive at
\begin{equation}\label{++}\begin{split}
0<A &\leq \alpha C(\lambda)\left(R^{\frac{\lambda j- \lambda+N}{2}}+R^{\frac{2(\lambda-1) j- \lambda+N}{2}}+R^{\frac{2j+ \lambda+N-4}{2}}\right)\\&-\alpha R^{-\frac{\lambda+N}{2}}\int_{\Omega^c} u_1(x)\psi_2(x) dx,\end{split}\end{equation}
where $C(\lambda)>0$ is a positive constant independent of $R.$

Suppose that
$$\left\{\begin{array}{l}
\lambda<0,\,\,\,\,\,\,\,\,\,\,\,\,\,\,\text{if}\,\,\,N=3,4,\\{}\\
\lambda<4-N,\,\,\text{if}\,\,N\geq 5.
\end{array}\right.$$
Then, choosing $$0<j<\min\biggl\{\frac{\lambda-N}{\lambda}, \frac{\lambda-N}{2(\lambda-1)}, \frac{4-N-\lambda}{2}\biggr\}$$ we verify that $$\frac{\lambda j- \lambda+N}{2}<0,\,\,\frac{2(\lambda-1) j- \lambda+N}{2} <0\,\,\,\text{and}\,\,\,\frac{2j+ \lambda+N-4}{2}<0.$$
Since $$\frac{\lambda-N}{\lambda}>0,\,\,\frac{\lambda-N}{2(\lambda-1)}>0\,\,\,\,\text{and} \,\,\,\,\frac{4-N-\lambda}{2}>0,$$ the set $\left(0,\min\biggl\{\frac{\lambda-N}{\lambda}, \frac{\lambda-N}{2(\lambda-1)}, \frac{4-N-\lambda}{2}\biggr\}\right)$ is
 also not empty.

Therefore, passing to the limit as $R\to+\infty$ in \eqref{++}, a contradiction follows $A>0$, which completes the proof.
\end{proof}

\section*{Declaration of competing interest}
The authors declare that there is no conflict of interest.

\section*{Data Availability Statements} The manuscript has no associated data.

\section*{Acknowledgments}
This research has been funded by the Science Committee of the Ministry of Education and Science of the Republic of Kazakhstan (Grant No. AP14869090). Berikbol Torebek is also supported by the FWO Odysseus 1 grant G.0H94.18N: Analysis and Partial Differential Equations and by the Methusalem programme of the Ghent University Special Research Fund (BOF) (Grant number 01M01021).

\end{document}